\documentclass[10pt]{article}
\usepackage{amssymb}
\usepackage{amsthm}
\usepackage{graphicx}
\usepackage[shortcuts]{extdash}
\newtheorem{theorem}{Theorem}
\newtheorem{lemma}{Lemma}
\newtheorem{corollary}{Corollary}
\newtheorem{remark}{Remark}

\newtheorem{conjecture}{Conjecture}

\newtheorem{obs}{Observation}
\newcommand{\beq}{\begin{equation}}
\newcommand{\eeq}{\end{equation}}
\def\Frac#1#2{\frac{\displaystyle{#1}}{\displaystyle{#2}}}

\def\RE{\mathbb R}
\bibliographystyle{plain}

 \title{Uniform relations between the Gauss-Legendre nodes and weights}
\author{
\'Oscar L\'opez Pouso\footnotemark[1]
 \and
Javier Segura\footnotemark[2]
\\
}

\begin{document}

\maketitle

\renewcommand{\thefootnote}{\fnsymbol{footnote}}

\footnotetext[1]{Department of
Applied Mathematics, Universidade de Santiago de Compostela and Centro de Investigaci\'on y Tecnolog\'{\i}a Matem\'atica 
de Galicia (CITMAga), 15782 Santiago de Compostela, Spain. E-mail: oscar.lopez@usc.es}
\footnotetext[2]{ Departamento de Matem\'aticas, Estad\'{\i}stica y
        Computaci\'on. Universidad de Cantabria, 39005 Santander, Spain. E-mail: javier.segura@unican.es}

\begin{abstract}
Four different relations between the Legendre nodes and weights are presented which, unlike the
circle and trapezoid theorems for Gauss-Legendre quadrature, hold uniformly in the whole interval $(-1,1)$. 
These properties are supported by strong asymptotic evidence.
The study of these results was originally motivated by the role some of them
 play in certain finite difference schemes used in the discretization of the angular Fokker-Planck
 diffusion operator.
\end{abstract}

{\bf Keywords:} Gauss-Legendre quadrature, asymptotic expansions, uniform relations, finite difference schemes, angular Fokker\=/Planck diffusion operator.

{\bf MSC2020:} 65D32, 33C45, 65D25, 65Z05.

\section{Introduction}

Given a quadrature rule $Q(f)$ for approximating integrals $I(f)$ in the interval $[-1,1]$
$$
I(f)=\displaystyle\int_{-1}^{1}f(x)dx\approx Q(f)=\displaystyle\sum_{i=1}^n w_i f(x_i),
$$
the formula is said to be of Gauss-Legendre type if it has the maximum possible degree of exactness, that
is, if $I(f)=Q(f)$ for $f$ any polynomial of degree less than $2n$. Gauss-Legendre quadrature is a particular
case of a more general set of quadrature rules, known as Gauss quadratures, 
and it is the most widely used of them. 
The fact that this rule has maximum degree of exactness implies that it is rapidly convergent as the number
of nodes increases, in particular for integrating analytic functions.

Gauss-Legendre nodes ($x_i$) and weights ($w_i$) are important quantities appearing in numerous applications, as they are not
only relevant in numerical integration but also in the approximation by barycentric interpolation at the Legendre nodes (see for instance the applications mentioned in \cite{Bogaert:2014:IFC} and
\cite{Gil:2019:NIG}) and in spectral methods. 
In the present paper, the study of the uniform relations between the Legendre nodes and weights
has been motivated
by the role these (so far unproven) relations play in certain finite difference schemes used in the field
of nuclear engineering, specifically for discretizing the angular
Fokker\=/Planck diffusion operator (see \cite{HA-LI,MO,Lopez:2023:AOD}). 
In this paper we provide strong asymptotic
 evidence for such uniform results.

In \cite{Davis:1961:SGT}, some asymptotic relations between the nodes and weights of Gauss quadratures
were explored, which hold for all the nodes in any fixed compact subinterval of $(-1,1)$; 
see also \cite{Gautschi:2006:TCT} for a more recent account of this type of results.
The
fact that these results hold in fixed compact intervals is relevant as this means that such results
may not be valid close to the endpoints of the interval of orthogonality.
In particular, for the Gauss-Legendre quadrature the nodes are known to cluster at the
ends of the interval $(-1,1)$ as the degree tends to infinity, and as a consequence
those results can fail (and in fact do fail) for the extreme nodes, as explained in section \ref{sectcir}.

In this paper we present four cases of asymptotic relations between nodes and weights of Gauss-Legendre quadrature
which hold uniformly for all the nodes in the interval of orthogonality and they do so with a nearly constant error term (tending to zero as the degree increases). Similar relations may exist
for more general quadratures, and they will be explored in a future paper.

The paper is organized as follows. In section \ref{prelim} we summarize known 
asymptotic results for the Legendre nodes and
weights, both considering expansions in terms of elementary functions and in terms of Bessel functions. In section \ref{sectcir}
the circle and trapezoid theorems, first presented in \cite{Davis:1961:SGT}, are revisited and it is discussed how these relations
 fail for the extreme nodes and how a uniform version of the circle theorem (which holds for all the nodes) 
 can be easily obtained. Uniform relations involving sums of weights are described in \ref{uniformr}, which can been
  understood as a uniform counterpart of the trapezoid theorems; three uniform relations are described in this section.
 Finally, section \ref{nuclear} describes the connection of these uniform relations with some finite difference schemes, 
 in particular for discretizing the angular
Fokker-Planck diffusion operator; the uniform relations discussed in this paper provide alternative sets of nodes to Legendre nodes 
which may be useful in other numerical contexts.

\section{Some preliminary asymptotic results}
\label{prelim}

We briefly recall some asymptotic estimates for the nodes and weights of Gauss-Legendre quadrature. We consider
two types of estimations: elementary asymptotic expansions for large degree, valid in compact subintervals
inside the interval of orthogonality $(-1,1)$, and uniform expansions in terms of Bessel functions. Details on these expansions
are described in \cite{Gil:2019:NIG}. In particular, we use the compound Poincar\'e-type expansions of 
\cite[sect 2.2]{Gil:2019:NIG}, valid in compact subintervals of $(-1,1)$, and the Bessel-type expansion of 
\cite{Frenzen:1985:AUA}, with additional
information on the coefficients given in
\cite[sect 3]{Gil:2019:NIG}. Details on the computation of these expansions, particularly for the weights, are given in 
the Appendices A and B. As an alternative method for computing these asymptotic expansions for the nodes and weights we
mention the reference \cite{Opsomer:2022:AOA} which uses a Riemann-Hilbert approach.  
We only compute the terms we need to prove our results; additional terms  
are computed in \cite{Opsomer:2022:AOA} by using a symbolic computation package and, as expected, our expansions coincide 
with the first terms in \cite{Opsomer:2022:AOA}.
It is not the goal of the 
present paper to rederive asymptotic expansions for the nodes and weights, but to prove new uniform relations between them
which are uniformly valid in $(-1,1)$. 

\subsection{Elementary expansions}

The circle and trapezoid theorems of \cite{Davis:1961:SGT} can be obtained using elementary asymptotic estimates
as $n\rightarrow +\infty$ of the nodes and weights. We recall these results, and give some additional detail
by not only considering the dominant term.

Instead of using as asymptotic parameter $n$, it is more convenient, as done in
\cite{Gil:2019:NIG}, to use $\kappa=n+1/2$. Starting with the nodes, we write $x_i=\cos\theta_i$,
$i=1,\ldots n$, with  
\begin{equation}
\label{expanx}
\theta_i=\alpha_i+\delta_i,\,\alpha_i=\Frac{\kappa+\frac14-i}{\kappa}\pi,\,
\delta_i\sim\Frac{c_1}{\kappa^2}+\Frac{c_2}{\kappa^4}+\cdots.
\end{equation}
Details on the coefficients $c_i$ are given in \cite[sect. 2.2.2]{Gil:2019:NIG}.

For our purpose, we only need the first term in the expansion of $\delta_i$. We have
\begin{equation}
\label{del}
\delta_i=\Frac{\cot\alpha_i}{8\kappa^2}+{\cal O}(\kappa^{-4}),
\end{equation}
which coincides with the expansion in \cite{Gattes:1985:AAE}.

We notice that for the nodes close to the endpoints of the interval of orthogonality
$i$ is small or close to $n$, which means that $c_1={\cal O}(\kappa)$ because
$\cot\alpha_i={\cal O}(\kappa)$; similarly, $c_2={\cal O}(\kappa^3)$, and so on 
(see \cite[Eq. (2.28)]{Gil:2019:NIG}). This indicates
that the expansion for $\delta_i$ is not a valid asymptotic representation for such nodes.

Assuming now that we are not considering extreme nodes, we use (\ref{del}) and get
\begin{equation}
\label{x}
x_i=\cos (\alpha_i+\delta_i)=\cos\alpha_i-\delta_i\sin\alpha_i+{\cal O}(\kappa^{-4})=
F(\kappa) \cos\alpha_i +{\cal O}(\kappa^{-4}),
\end{equation}
where
\begin{equation}
\label{fka}
F(\kappa)=1-\Frac{1}{8\kappa^2}.
\end{equation}

For obtaining the corresponding expansion for the weights we must substitute the expansion for the nodes
 in the
expression for the weights in terms of the derivative of the Legendre polynomial, using the expansion for large order
of the polynomial and re-expanding the result (see Appendix \ref{expanwe}). We get the following expression for the
first terms:
\begin{equation}
\label{w}
w_i=\Frac{\pi}{\kappa}F(\kappa)\sin\alpha_i +{\cal O}(\kappa^{-5}).
\end{equation}

 \subsection{Bessel-type expansions}
 \label{Bessel}

 As an alternative to the previous elementary expansions, we can consider the expansions of
 \cite{Frenzen:1985:AUA} for Jacobi polynomials and their zeros, and use the corresponding
 expansion for the derivative in order to compute the weights \cite{Gil:2019:NIG}. These expansions
 are not so easy to handle as the elementary expansions because they are given in terms of
 Bessel functions but they have the advantage that they are valid for all the Legendre nodes, also
 for those closest to $\pm 1$.

 Particularizing the result for Jacobi polynomials of \cite{Frenzen:1985:AUA} to the
 Legendre case, the nodes are given by $x_i=\cos\theta_i$ with
 $$
 \theta_i=\beta_i+\delta_i,\,\beta_i=\Frac{j_{n-i+1}}{\kappa}
,\,\delta_i=-\Frac{1}{8\kappa^{2}}\Frac{1-\beta_i\cot \beta_i}{\beta_i}+\beta_i^2 {\cal O}(\kappa^{-3}),
 $$
 where $j_k$ is the $k$-th positive zero of the Bessel function $J_0(x)$. This gives
 $$
 x_{n-i} =F(\kappa)\cos\beta_{n-i}+\Frac{1}{8\kappa^2}\Frac{\sin\beta_{n-i}}{\beta_{n-i}}+\sin\beta_{n-i}\beta_{n-i}^2{\cal O}(\kappa^{-3})+{\cal O}(\kappa^{-4}).
 $$
 Now, in the limit $\kappa\rightarrow +\infty$ for finite $i$, we have $\beta_{n-i}=j_{i+1}/\kappa={\cal O}(\kappa^{-1})$ (notice that, as $i\rightarrow +\infty$, $j_{i+1}\sim i\pi$),
 and expanding in powers of $\beta_i$
 \begin{equation}
 \label{dos}
 x_{n-i}=1-\Frac{\beta_{n-i}^2}{2}+{\cal O}(\kappa^{-4})=1-\Frac{j_{i+1}^2}{2\kappa^2}+{\cal O}(\kappa^{-4}).
 \end{equation}

 Of course, this last approximation cannot be accurate for the nodes close to $0$.
 For example, for the smallest nodes $i\sim n/2$, and for such nodes and large $n$,
 $j_{n-i+1}\sim j_{n/2}\sim  n\pi/2$ (see \cite[Eq. 10.21.19]{NIST}),
 and so $\beta_i\sim \pi /2 +{\cal O}(n^{-1})$ (as corresponds to nodes close to zero);
 then the approximation (\ref{dos}) is inaccurate for such nodes. This approximation can only be used
 for $i<<n$.

 For the weights, we must substitute the expansion for the nodes in the expression for the weights in terms
 of the derivative of Legendre polynomials, using the Bessel-type expansion for this derivative, and
 re-expanding again in inverse powers of $\kappa$ (see Appendix \ref{BesselW}). We get

\begin{equation}
\label{dosdos}
w_{n-i}=\Frac{2}{\kappa^2 J_1 (j_{i+1})^2}\left(1-\Frac{1}{12\kappa^2}-
\Frac{j_{i+1}^2}{6\kappa^2}+{\cal O}(\kappa^{-4})\right).
\end{equation}

Similarly as  commented after Eq. (\ref{dos}), this last approximation is accurate  for $i<<n$.

\section{The circle and trapezoid theorems}
\label{sectcir}

Next, we recall the results of \cite{Davis:1961:SGT} for the case of Gauss-Legendre quadrature,
but giving one additional term in the expansions.

\begin{theorem}
\label{circle}
Let $[a,b]\subset (-1,1)$, then
\begin{enumerate}
\item{}
$\Frac{\kappa w_i}{\pi\sqrt{1-x_i^2}}=1-\Frac{1}{8\sin^2 \alpha_i\kappa^2}+{\cal O}(\kappa^{-4})$
for any $i$ such that $x_i\in [a,b]$.
\item{}$\Frac{2(x_{i+1}-x_i)}{w_{i+1}+w_i}=1+\Frac{\pi^2}{12\kappa^2}+{\cal O}(\kappa^{-4})$ for any
$i$ such that $x_i,\,x_{i+1}\in [a,b]$.
\item{}$\Frac{x_{i+1}-x_{i-1}}{2w_i}=1-\Frac{\pi^2}{6\kappa^2}+{\cal O}(\kappa^{-4})$ for any
$i$ such that $x_{i-1},\,x_{i+1}\in [a,b]$.
\end{enumerate}
\end{theorem}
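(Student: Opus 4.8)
The plan is to obtain all three relations directly from the elementary expansions (\ref{x}) for the nodes and (\ref{w}) for the weights, relying throughout on the exact cancellation of the common factor $F(\kappa)$ and of the trigonometric factors between numerator and denominator of each ratio. Since the argument uses the expansion of $\delta_i$ in (\ref{del}), which is only asymptotic away from the endpoints, the restriction $[a,b]\subset(-1,1)$ is built in from the start.

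For item 1 (the circle theorem) I would first square (\ref{x}). Using $F(\kappa)^2=1-\frac{1}{4\kappa^2}+{\cal O}(\kappa^{-4})$ gives $1-x_i^2=\sin^2\alpha_i+\frac{\cos^2\alpha_i}{4\kappa^2}+{\cal O}(\kappa^{-4})=\sin^2\alpha_i\left(1+\frac{\cot^2\alpha_i}{4\kappa^2}\right)+{\cal O}(\kappa^{-4})$, and, since $\sin\alpha_i>0$ on $[a,b]$, taking the square root yields $\sqrt{1-x_i^2}=\sin\alpha_i\left(1+\frac{\cot^2\alpha_i}{8\kappa^2}\right)+{\cal O}(\kappa^{-4})$. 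Feeding this and (\ref{w}) into $\frac{\kappa w_i}{\pi\sqrt{1-x_i^2}}$, the factor $\sin\alpha_i$ cancels and the quotient reduces to $F(\kappa)\left(1+\frac{\cot^2\alpha_i}{8\kappa^2}\right)^{-1}+{\cal O}(\kappa^{-4})$; expanding and using $1+\cot^2\alpha_i=\csc^2\alpha_i$ gives $1-\frac{1}{8\sin^2\alpha_i\kappa^2}+{\cal O}(\kappa^{-4})$, as claimed. No differencing is involved here, so the remainder of (\ref{w}), which contributes only $\kappa\cdot{\cal O}(\kappa^{-5})={\cal O}(\kappa^{-4})$, is harmless.

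For the two trapezoid relations the key structural fact is that $\alpha_{i+1}-\alpha_i=-\pi/\kappa$ \emph{exactly}, so the sum-to-product identities apply cleanly. Writing $\bar\alpha=(\alpha_i+\alpha_{i+1})/2$, item 2 uses $x_{i+1}-x_i=2F(\kappa)\sin\bar\alpha\sin\frac{\pi}{2\kappa}+\cdots$ and $w_{i+1}+w_i=\frac{2\pi}{\kappa}F(\kappa)\sin\bar\alpha\cos\frac{\pi}{2\kappa}+\cdots$; the factors $F(\kappa)\sin\bar\alpha$ cancel, leaving $\frac{2\kappa}{\pi}\tan\frac{\pi}{2\kappa}=1+\frac{\pi^2}{12\kappa^2}+{\cal O}(\kappa^{-4})$. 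For item 3 the abscissae $\alpha_{i\pm1}$ are symmetric about $\alpha_i$, so $x_{i+1}-x_{i-1}=2F(\kappa)\sin\alpha_i\sin\frac{\pi}{\kappa}+\cdots$ and $2w_i=\frac{2\pi}{\kappa}F(\kappa)\sin\alpha_i+\cdots$, and after cancellation one is left with $\frac{\kappa}{\pi}\sin\frac{\pi}{\kappa}=1-\frac{\pi^2}{6\kappa^2}+{\cal O}(\kappa^{-4})$. In both cases the $\kappa^{-2}$ correction is nothing but the leading Taylor coefficient of an elementary function of $\pi/\kappa$.

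The step I expect to be delicate is the remainder bookkeeping for the node differences in items 2 and 3. The main terms $x_{i+1}-x_i$ and $x_{i+1}-x_{i-1}$ are of size ${\cal O}(\kappa^{-1})$, so a relative error of ${\cal O}(\kappa^{-4})$ demands that the difference of the remainders of (\ref{x}) at consecutive nodes be ${\cal O}(\kappa^{-5})$; the naive bound from subtracting two independent ${\cal O}(\kappa^{-4})$ quantities would only give ${\cal O}(\kappa^{-4})$, hence a useless relative ${\cal O}(\kappa^{-3})$. The way out is that this remainder is not arbitrary: by (\ref{expanx})--(\ref{del}) it is an asymptotic series in $\kappa^{-2}$ whose coefficients are smooth functions of $\alpha_i$, so its consecutive difference picks up the node spacing $|\alpha_{i+1}-\alpha_i|=\pi/\kappa$ and is genuinely ${\cal O}(\kappa^{-5})$. (The weight sums need no such care, the remainder of (\ref{w}) already being ${\cal O}(\kappa^{-5})$ against an ${\cal O}(\kappa^{-1})$ main term.) This smoothness is precisely what fails near $\pm1$, where, as noted after (\ref{del}), $\cot\alpha_i$ and the higher coefficients grow like powers of $\kappa$; it is exactly this breakdown that the uniform relations of the later sections are meant to repair.
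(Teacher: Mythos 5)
Your proof is correct and follows essentially the same route as the paper: items 2 and 3 reproduce the paper's sum-to-product computation leading to $\frac{2\kappa}{\pi}\tan\frac{\pi}{2\kappa}$ and $\frac{\kappa}{\pi}\sin\frac{\pi}{\kappa}$, and your treatment of item 1 by squaring (\ref{x}) is equivalent to the paper's direct expansion of $\sin\theta_i=\sin(\alpha_i+\delta_i)$ via (\ref{del}). Your closing remark---that the ${\cal O}(\kappa^{-4})$ remainder of (\ref{x}) is a smooth function of $\alpha_i$ on compact subintervals, so its consecutive difference gains a factor $\pi/\kappa$ and the trapezoid ratios really do have relative error ${\cal O}(\kappa^{-4})$ rather than ${\cal O}(\kappa^{-3})$---is a point the paper leaves implicit, and it is exactly the right justification.
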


\begin{proof}
These relations follow from (\ref{x}) and (\ref{w}).
\begin{enumerate}
\item{}With $x_i=\cos\theta_i$, this is the same as proving that
$$
\Frac{\pi\sin\theta_i}{\kappa w_i}=1+\Frac{1}{8\sin^2\alpha_i\kappa^2}+{\cal O}(\kappa^{-4}),
$$
which follows from (\ref{w}) and the fact that
$$
\sin\theta_i=\sin(\alpha_i+\delta_i)=\sin(\alpha_i)+\delta_i\cos\alpha_i +{\cal O}(\kappa^{-4}),
$$
which, using (\ref{del}), gives
$$
\sin\theta_i =F(\kappa)\sin\alpha_i+\Frac{1}
{8\sin \alpha_i\kappa^2}+{\cal O}(\kappa^{-4}).
$$
With this and Eq. (\ref{w}) the result is proved.

\item{} The expansion (\ref{x}) and some elementary trigonometry gives
$$
\begin{array}{ll}
x_{i+1}-x_i&=F(\kappa)\left(\cos\alpha_{i+1}-\cos\alpha_i\right)
+{\cal O}(\kappa^{-4})\\
&=2F(\kappa)\sin\left(\Frac{\alpha_i-\alpha_{i+1}}{2}\right)
\sin\left(\Frac{\alpha_i+\alpha_{i+1}}{2}\right)
+{\cal O}(\kappa^{-4})\\
&=2F(\kappa)\sin\left(\Frac{i+1/4}{\kappa}\pi \right)
\sin\left(\Frac{\pi}{2\kappa}\right)
+{\cal O}(\kappa^{-4})
\end{array}
$$
and similarly, using (\ref{w})
$$
\Frac{w_i+w_{i+1}}{2}=\Frac{\pi}{\kappa}F(\kappa)\sin\left(\Frac{i+1/4}{\kappa}\pi \right)
\cos\left(\Frac{\pi}{2\kappa}\right)
+{\cal O}(\kappa^{-5}).
$$

Then
$$
\Frac{2(x_{i+1}-x_i)}{w_{i+1}+w_i}=\Frac{2\kappa}{\pi}\tan\left(\Frac{\pi}{2\kappa}\right)
+{\cal O}(\kappa^{-4}),
$$
and the first two terms in the expansion as $\kappa\rightarrow +\infty$ prove the result.

\item{} First, we have, using (\ref{x})
$$
\begin{array}{ll}
\Frac{x_{i+1}-x_{i-1}}{2}&=F(\kappa)\sin\left(\Frac{\alpha_{i-1}-\alpha_{i+1}}{2}\right)
\sin\left(\Frac{\alpha_{i-1}+\alpha_{i+1}}{2}\right)+{\cal O}(\kappa^{-4})\\
& \\
&=F(\kappa)\sin\left(\Frac{\pi}{\kappa}\right)\sin\alpha_i+{\cal O}(\kappa^{-4}).
\end{array}
$$
Then, because $w_i=\Frac{\pi}{\kappa}F(\kappa)\sin\alpha_i+{\cal O}(\kappa^{-5})$, we get
$$
\Frac{x_{i+1}-x_{i-1}}{2w_i}=\Frac{\kappa}{\pi}\sin\left(\Frac{\pi}{\kappa}\right)+{\cal O}(\kappa^{-4}),
$$
and the first two terms in the expansion of the sine function give the result.

\end{enumerate}
\end{proof}

The first result in the previous theorem is related to the circle theorem, which states that $\Frac{1}{\pi}{nw_i}\sim \sqrt{1-x_i^2}$ as
$n\rightarrow +\infty$, that is,
that
$$
\displaystyle\lim_{n\rightarrow +\infty}\Frac{nw_i}{\pi \sqrt{1-x_i^2}}=1
$$
for any $i$ such that $x_i$ is inside a fixed compact subinterval of $(-1,1)$. We see that the
condition that it holds inside a compact given subinterval is important. Indeed, the extreme
nodes tend to $\pm 1$ as
$n\rightarrow +\infty$, and for those the result does not hold. We observe that in the previous theorem
we have
$$
\Frac{\kappa w_i}{\pi\sqrt{1-x_i^2}}=1-\Frac{1}{8\sin^2 \alpha_i\kappa^2}+{\cal O}(\kappa^{-4})
$$
and for $i=n$ (largest node) the second term in the right hand side does not tend to zero because
$\sin\alpha={\cal O}(\kappa^{-1})$. This indicates that the elementary expansion fails for such zero, and
suggests that the circle theorem does not hold for the largest node;
the same would be true for the second largest zero, and for finite number of zeros close to the endpoints of the
interval of orthogonality. For
the rest of properties in the theorem, the failure is not so evident,
but the next terms in the expansions (not shown) do display such problems. These results are neither valid for the extreme zeros, although the
deviation is smaller in that cases.

To see more explicitly how these results fail, we can use the asymptotics in terms of Bessel functions. For the
 circle theorem, we have
\begin{equation}
\label{lass}
\Frac{\kappa w_{n-i}}{\pi \sqrt{1-x_{n-i}^2}}=
\Frac{\kappa w_{n-i}}{\pi \sin\beta_{n-i}(1+{\cal O}(\kappa^{-2}))}\sim\Frac{2}{\pi j_{i+1}J_{1}(j_{i+1})^2}<1.
\end{equation}
That this quantity is smaller than $1$, therefore violating the circle theorem, is easy 
to prove 
(see Appendix \ref{Sonin}). Defining
\begin{equation}
\label{ai}
a_i=1-\Frac{2}{\pi j_{i+1}J_{1}(j_{i+1})^2},
\end{equation}
one has that this is a positive and decreasing sequence (Appendix \ref{Sonin}), 
and the circle theorem is violated maximally at the largest
node, where $a_0=0.0177079\ldots$,
while for the next node we have $a_1=0.0039098\dots$

With respect to the second result of the theorem, and considering the approximations (\ref{dos}) and (\ref{dosdos}),
we have that
$$
\Frac{w_{n-i}+w_{n-i-1}}{2(x_{n-i}-x_{n-i-1})}\sim\Frac{2}{j_{i+2}^2-j_{i+1}^2}
\left(\Frac{1}{J_1(j_{i+2})^2}+\Frac{1}{J_1(j_{i+1})^2}\right)\equiv b_i +1.
$$
We have checked numerically that the the values $b_i$  
constitute a positive decreasing sequence and the second result of the previous theorem is
violated maximally at the largest zero, where $b_0=0.0002756\ldots$
Finally, for the third result we have
$$
\Frac{x_{n-i+1}-x_{n-i-1}}{2w_{n-i}}\sim \frac18 J_i (j_{i+1})^2 (j_{i+2}^2-j_i^2)\equiv c_i +1
$$
and the values $c_i$  constitute a positive decreasing sequence, as checked numerically. 
The third result of the previous theorem is
violated maximally at the largest zero, where $c_0=0.00010624\ldots$. 

The third result is heuristically 
related to the trapezoidal rule in \cite{Davis:1961:SGT}. Both the second and the third results of
the previous theorem are mentioned as trapezoid theorems in \cite{Davis:1961:SGT}.

\subsection{A first uniform relation: the uniform circle theorem for Gauss-Legendre quadrature}
\label{AFU}

It turns out that it is quite easy to find a uniform version of the circle theorem. We enunciate this
result, although we must recognize that the proof is not complete in the sense we will discuss next.

\begin{conjecture}[Uniform circle theorem]
\label{uct}
The points $(x_i,y_i)\equiv (x_i,\kappa w_i/\pi)$, $i=1,\ldots n$, where $x_i$ and $w_i$ are respectively the nodes and weights of the 
$n$-points Gauss-Legendre quadrature, lie asymptotically on the unit circle. Furthermore
$$
0<1-(x_i^2+y_i^2)<\Frac{1}{4\kappa^2},\,i=1,\ldots n.
$$
\end{conjecture}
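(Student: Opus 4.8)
The plan is to reduce the statement to a study of the leading $\kappa^{-2}$ coefficient of $1-(x_i^2+y_i^2)$ and to treat the nodes away from the endpoints and the nodes near $\pm1$ with the two families of expansions of Section~\ref{prelim}. In each regime I expect $1-(x_i^2+y_i^2)$ to equal $\kappa^{-2}$ times a coefficient lying strictly between $0$ and $\Frac14$, the value $\Frac14$ being attained only in the limit, so that the two-sided bound of the statement follows. Writing $y_i=\kappa w_i/\pi$, the whole argument rests on the fact that, at leading order, $(x_i,y_i)$ is a rescaled trigonometric (resp. Bessel) pair, so the angular variable drops out of $x_i^2+y_i^2$.

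For the nodes in a fixed compact subinterval I would use (\ref{x}) and (\ref{w}). These give $x_i=F(\kappa)\cos\alpha_i+{\cal O}(\kappa^{-4})$ and $y_i=F(\kappa)\sin\alpha_i+{\cal O}(\kappa^{-4})$, so $x_i^2+y_i^2=F(\kappa)^2+{\cal O}(\kappa^{-4})$. With $F(\kappa)=1-\Frac{1}{8\kappa^2}$ from (\ref{fka}) this yields
\beq
1-(x_i^2+y_i^2)=\Frac{1}{4\kappa^2}-\Frac{1}{64\kappa^4}+{\cal O}(\kappa^{-4}),
\eeq
so the leading coefficient is exactly $\Frac14$, the first correction is negative, and for these nodes the deviation is positive and strictly below $\Frac{1}{4\kappa^2}$ once $\kappa$ is large.

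For the extreme nodes I would use the Bessel-type expansions (\ref{dos}) and (\ref{dosdos}). From $x_{n-i}=1-\Frac{j_{i+1}^2}{2\kappa^2}+{\cal O}(\kappa^{-4})$ and $y_{n-i}=\Frac{2}{\pi\kappa J_1(j_{i+1})^2}(1+{\cal O}(\kappa^{-2}))$ one obtains
\beq
1-(x_{n-i}^2+y_{n-i}^2)=\Frac{g_i}{\kappa^2}+{\cal O}(\kappa^{-4}),\qquad
g_i=j_{i+1}^2-\Frac{4}{\pi^2 J_1(j_{i+1})^4}.
\eeq
The key simplification is that, in terms of the sequence $a_i$ of (\ref{ai}), for which $\Frac{2}{\pi j_{i+1}J_1(j_{i+1})^2}=1-a_i$, one has the exact identity $g_i=j_{i+1}^2\,a_i\,(2-a_i)$. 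Positivity of $g_i$ is then immediate from $a_i>0$, the Sonin-type inequality established in Appendix~\ref{Sonin} that already forces the circle theorem to fail from below at the extreme nodes. The remaining, quantitative point is the upper bound $g_i<\Frac14$. Numerically $g_0=0.2030\ldots$, $g_1=0.2378\ldots$, and the $g_i$ increase towards $\Frac14$; inserting the refined large-$k$ asymptotics $J_1(j_{i+1})^2=\Frac{2}{\pi j_{i+1}}\left(1+\Frac{1}{8j_{i+1}^2}+{\cal O}(j_{i+1}^{-4})\right)$ into the identity for $g_i$ gives $a_i=\Frac{1}{8 j_{i+1}^2}+{\cal O}(j_{i+1}^{-4})$ and hence $g_i\to\Frac14$, matching the interior coefficient. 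The decisive step is the sign of the next correction: the numerics indicate that $g_i$ approaches $\Frac14$ from below, so I would make the $j_{i+1}^{-4}$ term in $g_i$ explicit to establish $g_i<\Frac14$ for all large $i$, and check the finitely many small indices directly.

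The hard part, and the reason the statement is phrased as a conjecture, is uniformity across the transition between the two regimes. Neither expansion carries a remainder controlled uniformly in the index: the elementary remainder in (\ref{x})--(\ref{w}) degrades as the nodes approach $\pm1$ (as already noted after (\ref{del})), while the Bessel remainder in (\ref{dos})--(\ref{dosdos}) is only accurate for $i\ll n$. Thus the two computations certify $0<1-(x_i^2+y_i^2)<\Frac{1}{4\kappa^2}$ only in their own asymptotic ranges and for $\kappa$ large, leaving a band of intermediate indices uncovered and giving no control for small $n$. Closing the gap would require a single representation of both $x_i$ and $w_i$ with a genuinely uniform error bound, valid for all $i=1,\ldots,n$ simultaneously, from which the strict two-sided inequality could be read off directly; absent such a representation, the computations above constitute strong asymptotic evidence rather than a complete proof.
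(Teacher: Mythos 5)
Your argument follows essentially the same route as the paper: the same two\-/regime split (elementary expansions (\ref{x})--(\ref{w}) on compact subintervals giving leading coefficient exactly $\frac{1}{4}$, Bessel expansions (\ref{dos})--(\ref{dosdos}) near $\pm 1$ giving the coefficient the paper calls $k_i$, which is identical to your $g_i$), the same appeal to Appendix \ref{Sonin} for positivity, the same limit $\frac{1}{4}$ via the large\-/argument and McMahon asymptotics, and the same honestly acknowledged gaps (unproven monotonicity of the coefficient sequence and the absence of a single uniformly valid representation bridging the two regimes). Your identity $g_i=j_{i+1}^2\,a_i\,(2-a_i)$ is a nice explicit form of the paper's appeal to the properties of $a_i$; the only caveat is that your explicit $-\Frac{1}{64\kappa^4}$ term in the interior regime is swallowed by the ${\cal O}(\kappa^{-4})$ remainder of (\ref{x})--(\ref{w}), so it cannot by itself justify the strict upper bound for those nodes.
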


The asymptotic evidence supporting this result is strong, as we see next. To begin with, the first part of 
the theorem is trivially 
true for the nodes in any fixed compact subinterval of $(-1,1)$. Considering  (\ref{x}) and (\ref{w}) we have
that
$$
x_i^2+y_i^2=F(\kappa)^2+{\cal O}(\kappa^{-4})=1-\Frac{1}{4\kappa^2}+{\cal O}(\kappa^{-4}),
$$
and therefore the points $(x_i,y_i)$
lie asymptotically on the unit circle.
That the same holds for the nodes close to $+1$ (or $-1$) can be checked by using (\ref{dos}) and (\ref{dosdos}), which
gives
\begin{equation}
\label{newcircl}
x_{n-i}^2+y_{n-i}^2=1-\Frac{k_i}{\kappa^2}+{\cal O}(\kappa^{-4}),\,k_i=j_{i+1}^2 \left(1-\Frac{4}{\pi^2j_{i+1}^2 J_1(j_{i+1})^4}\right),
\end{equation} 

The constant $k_i$ is positive as a consequence of the properties of $a_i$ that are shown
in  Appendix \ref{Sonin}; in addition, it appears that $k_i$ is monotonically increasing as a function of $i$, which we have 
not proven so far, and it is easy to check that
\begin{equation}
\label{limi4}
\displaystyle\lim_{i\rightarrow +\infty}k_i=\Frac{1}{4}
\end{equation}
using the asymptotic
expansion for Bessel functions of large argument \cite[10.17.3]{NIST}, together with the McMahon expansion of the zeros 
$j_{i+1}$ for large $i$ \cite[10.21.19]{NIST}. An even more direct way to check this is by using the direct 
expansion given in \cite[Eq. (4.1)]{Bogaert:2014:IFC} for $J_1 (j_k)^2$ for large $k$ (precisely obtained by combining the aforementioned expansions), together with 
the McMahon expansion for the zeros. Substituting both expansions in the definition of $k_i$ of Eq. (\ref{newcircl}) and reexpanding we get
$$
k_i=\Frac{1}{4} -\Frac{7}{16 \mu_i^2} + {\cal O}(\mu_i^{-4}),
$$
where $\mu_i=\pi (i+3/4)$. This proves the limit (\ref{limi4}) and suggests (but does not prove) that the sequence is increasing.

The (so far unproven) monotonicity of $k_i$ together with (\ref{limi4}) imply the inequalities stated in the previous conjecture.	 
This is a conjecture not only because we have not proved the monotonicity of the 
sequence $\left\{k_i\right\}$
 but also because we have a separate proof for the nodes 
inside any fixed compact subinterval of $(-1,1)$ 
and for the extreme nodes. One would need to make a 
unified analysis for all the nodes in order to arrive to a rigurous proof, using only the Bessel-type expansion and
without the type of re-expansion considered in (\ref{dos}), so that the asymptotics hold for all the nodes and weights. 
Notice that, even with the re-expansion we need to assume some conjectured 
properties of Bessel functions for which there is no proof so far (although we have managed to prove one of them, as 
we will see). 
This complete and fully rigurous 
analysis is appealing and highly challenging, but outside the scope of the present work.

Similar discussions can be considered for other conjectures
we will present later. In all these cases, apart from the strong asymptotic evidence presented, the conjectures have been numerically validated
to high accuracy with an arbitrary precision implementation of the
 algorithm for Gauss-Gegenbauer quadrature of \cite{Gil:2021:FAR}.

  We can re-write the inequalities in Conjecture \ref{uct} as follows
  \begin{corollary}
  The nodes $x_i$ and weights $w_i$ of Gauss-Legendre quadrature satisfy
  $$
  \displaystyle\sqrt{1-\Frac{1}{4\kappa^2 (1-x_i^2)}}<\Frac{y_i}{\sqrt{1-x_i^2}}<1,\,i=1,\ldots n
  $$
  \end{corollary}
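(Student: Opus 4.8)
The plan is to derive the corollary as a purely algebraic rearrangement of the two inequalities asserted in Conjecture~\ref{uct}, so I would assume throughout that $0<1-(x_i^2+y_i^2)<\frac{1}{4\kappa^2}$ holds. The only extra ingredients needed are two standard facts about Gauss-Legendre quadrature: the weights are positive, so that $y_i=\kappa w_i/\pi>0$, and every node lies strictly inside the interval of orthogonality, so that $1-x_i^2>0$. These positivity statements are what allow the manipulations below to preserve the direction of the inequalities and to permit division by $1-x_i^2$ and extraction of positive square roots.

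For the upper bound I would begin with the left half of the conjecture, $1-(x_i^2+y_i^2)>0$, which is equivalent to $y_i^2<1-x_i^2$. Dividing by the positive quantity $1-x_i^2$ gives $y_i^2/(1-x_i^2)<1$, and since $y_i>0$ the positive square root yields $y_i/\sqrt{1-x_i^2}<1$, the right-hand inequality of the corollary. For the lower bound I would start from the right half of the conjecture, $1-(x_i^2+y_i^2)<\frac{1}{4\kappa^2}$, and rewrite it as $y_i^2>(1-x_i^2)-\frac{1}{4\kappa^2}$. Dividing once more by $1-x_i^2>0$ produces $y_i^2/(1-x_i^2)>1-\frac{1}{4\kappa^2(1-x_i^2)}$, and taking square roots delivers the left-hand inequality.

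Strictly speaking there is no genuine obstacle here, since all the analytic content resides in Conjecture~\ref{uct} and the corollary is merely its reformulation. The one point deserving a word of care is the final square root. For the extreme nodes $1-x_i^2$ can be as small as order $\kappa^{-2}$, so the radicand $1-\frac{1}{4\kappa^2(1-x_i^2)}$ may be negative; in that regime the stated lower bound is vacuous and holds trivially because $y_i/\sqrt{1-x_i^2}>0$. When the radicand is nonnegative, monotonicity of the square root on $[0,\infty)$ preserves the inequality, which completes the equivalence between the corollary and the conjecture.
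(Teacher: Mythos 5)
Your proposal is correct and matches the paper's treatment: the paper introduces this corollary with the words ``We can re-write the inequalities in Conjecture \ref{uct} as follows,'' i.e.\ it is exactly the algebraic rearrangement you carry out, using positivity of the weights and of $1-x_i^2$. Your extra remark about the radicand possibly being negative for extreme nodes is a sensible clarification the paper omits, but it does not change the argument.
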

  This corollary shows, again,
  that the ``classical" circle theorem does not hold for the extreme nodes because for such nodes
  $1-x_i^2={\cal O}(\kappa^{-2})$.

\section{Uniform relations involving sums of weights}
\label{uniformr}

The results we will next describe were originally motivated by the role they 
 play in certain finite difference schemes used in the discretization of the angular Fokker-Planck
 diffusion operator (see section \ref{nuclear}). As we will see, the proof of these results relies 
 in part in the trapezoid theorems
 and therefore can be interpreted as a uniform counterpart of those results.
 
 We start this section by considering a set of definitions and notations.

Given the Legendre nodes $\{x_i\}_{i=1}^n$, we define the intermediate nodes $\{\bar{x}_i\}_{i=1}^{n-1}$ by
$$
\bar{x}_i=\Frac{x_i+x_{i+1}}{2}.
$$

We observe that
\begin{equation}
\label{prom}
\bar{x}_{i+1}=\bar{x}_i+\Frac{x_{i+2}-x_i}{2}.
\end{equation}

We will call these primary intermediate nodes in order to distinguish them from the secondary intermediate nodes
$\{\bar{z}_i \}$ that we define as follows:
$$
\bar{z}_0=-1,\,\bar{z}_i =\bar{z}_{i-1}+w_{i},\,i=1,\ldots n,
$$
that is,
\begin{equation}
\label{depr}
\bar{z}_{i}=-1+\displaystyle\sum_{j=1}^i w_j=1-\displaystyle\sum_{j=i+1}^n w_j.
\end{equation}
where we have used that $\displaystyle\sum_{i=1}^n w_i=2$ in the last equality. Notice that $\bar{z}_n =1$.

Additionally, we define the secondary nodes
$$
z_i=\Frac{\bar{z}_{i-1}+\bar{z}_i }{2},\,i=1,\ldots n,
$$
that is
\begin{equation}
\label{secon}
z_i=1-\displaystyle\sum_{j=i+1}^n w_j-\frac12 w_i =
z_{i}=-1+\displaystyle\sum_{j=1}^{i-1} w_j+\frac12 w_i.
\end{equation}

We observe that

\begin{equation}
\label{rela}
z_{i+1}=z_i+\Frac{w_i+w_{i+1}}{2}.
\end{equation}

\begin{remark}
\label{obse}
If $n=2k$, $k\in {\mathbb N}$, $\bar{z}_{k}=0$, $\bar{z}_{k+1}=w_{k+1}$ and then
$z_{k+1}=\frac12 w_{k+1}$ is the smallest positive secondary node. On the other hand,
if $n=2k+1$, $k\in {\mathbb N}$, then $z_{k+1}=0$ and the smallest positive secondary node is
$z_{k+2}=\frac12 (w_{k+1}+w_{k+2})$.

Of course, both the intermediate nodes and the secondary nodes are symmetrical with respect to $x=0$.
\end{remark}

Summarizing, we have defined the following sets of nodes:
the {\em (primary) Legendre nodes} $\{x_i\}_{i=1}^n$,
the {\em secondary nodes} $\{z_i\}_{i=1}^n$, the {\em primary intermediate nodes} 
$\{\bar{x}_i\}_{i=1}^{n-1}$ (which are
trivially interlaced with the primary nodes) and the {\em secondary intermediate nodes} 
$\{\bar{z}_i\}_{i=0}^n$, which
we will prove that are also interlaced with the primary nodes (and are trivially interlaced with the secondary
nodes). We will prove that the secondary nodes $\{z_i\}_{i=1}^n$ approach asymptotically the primary nodes,
with a very uniform error term which is ${\cal O}(\kappa^{-2})$, and that the same will be true with respect to
the primary and secondary intermediate nodes  $\{\bar{x}_i\}_{i=1}^{n-1}$ and $\{\bar{z}_i\}_{i=0}^n$.

\subsection{Relation between the primary and the secondary nodes}

That the secondary nodes approximate the primary nodes $x_i$ as
$n\rightarrow +\infty$ is one of the three uniform relations we will discuss in this section. We
start by proving this fact for the smallest positive nodes (we don't need to consider negative nodes for obvious reasons).

\begin{lemma}
Let $x_{i}$ be the smallest positive node, then
$$
\Frac{x_i}{z_i}=1+\Frac{\pi^2}{12\kappa^2}+{\cal O}(\kappa^{-4}).
$$
\end{lemma}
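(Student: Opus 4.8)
The plan is to reduce the claim directly to the second (trapezoid) relation of Theorem~\ref{circle}, exploiting the symmetry of the Gauss-Legendre nodes and weights about the origin. The essential observation is that the smallest positive node lies near $0$, hence safely inside a fixed compact subinterval of $(-1,1)$, so the compact-subinterval trapezoid theorem is applicable to it and its immediate neighbour. The error term it produces is exactly $\pi^2/(12\kappa^2)$, which is precisely the one claimed; this is what makes the relation uniform.

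First I would fix the parity of $n$ and invoke the standard symmetries $x_{n+1-j}=-x_j$ and $w_{n+1-j}=w_j$. In the case $n=2k$, the smallest positive node is $x_{k+1}$, so that $x_k=-x_{k+1}$ and $w_k=w_{k+1}$, while by Remark~\ref{obse} the associated secondary node is $z_{k+1}=\frac12 w_{k+1}$. Then
$$\Frac{x_{k+1}}{z_{k+1}}=\Frac{2x_{k+1}}{w_{k+1}}=\Frac{x_{k+1}-x_k}{w_{k+1}}=\Frac{2(x_{k+1}-x_k)}{w_{k+1}+w_k},$$
and the last expression is exactly the quantity estimated in the second part of Theorem~\ref{circle} with $i=k$. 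In the case $n=2k+1$ one has $x_{k+1}=0$, the smallest positive node is $x_{k+2}$, and by Remark~\ref{obse} the relevant secondary node is $z_{k+2}=\frac12(w_{k+1}+w_{k+2})$. Using $x_{k+2}=x_{k+2}-x_{k+1}$ gives
$$\Frac{x_{k+2}}{z_{k+2}}=\Frac{2x_{k+2}}{w_{k+1}+w_{k+2}}=\Frac{2(x_{k+2}-x_{k+1})}{w_{k+2}+w_{k+1}},$$
which is again the trapezoid quantity of the second part of Theorem~\ref{circle}, this time with $i=k+1$. In both parities the asserted expansion follows immediately by substituting the estimate already proven.

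The only point requiring care is confirming that Theorem~\ref{circle} genuinely applies, i.e.\ that the two nodes entering the trapezoid ratio remain inside a fixed compact $[a,b]\subset(-1,1)$ as $\kappa\rightarrow+\infty$. Since these are the nodes closest to the origin, the expansion (\ref{x}) shows that the relevant $\alpha_i$ are near $\pi/2$ and hence the nodes stay bounded away from $\pm1$ (one may take $[a,b]=[-1/2,1/2]$ for all sufficiently large $n$), so the hypothesis holds. I would therefore expect the substantive content to be entirely inherited from the trapezoid estimate, and the main ``obstacle'' to be merely the bookkeeping of the two parities together with the symmetry relations and the verification that the central nodes satisfy the compact-subinterval condition; there is no deeper difficulty, which is what makes this the natural starting point for the uniform relations of this section.
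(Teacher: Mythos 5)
Your proposal is correct and follows essentially the same route as the paper: split into the two parities of $n$, use the symmetry relations and Remark~\ref{obse} to rewrite $x_i/z_i$ as the trapezoid ratio $2(x_{i+1}-x_i)/(w_{i+1}+w_i)$, and apply the second part of Theorem~\ref{circle}. Your added remark that the central nodes lie in a fixed compact subinterval (so the theorem genuinely applies) is a detail the paper leaves implicit, but it is not a different argument.
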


\begin{proof}
If $n=2k$, $x_{k+1}$ is the smallest positive node and $x_{k}=-x_{k+1}$, $w_{k+1}=w_k$. In addition, as discussed in
Remark \ref{obse}, $z_{k+1}=\frac12 w_{k+1}$. Then
$$
\Frac{x_{k+1}}{z_{k+1}}=\Frac{2x_{k+1}}{w_{k+1}}=\Frac{x_{k+1}-x_k}{w_{k+1}}=2\Frac{x_{k+1}-x_k}{w_{k+1}+w_k},
$$
and considering the second result of Theorem \ref{circle} completes the proof for even $n$.

For $n=2k+1$, as discussed in Remark \ref{obse}, $x_{k+1}=z_{k+1}=0$, and the smallest positive node is
$x_{k+2}$. Now, using (\ref{rela}),
$$
z_{k+2}=z_{k+1}+\Frac{w_{k+2}+w_{k+1}}{2}=\Frac{w_{k+2}+w_{k+1}}{2},
$$
then
$$
\Frac{x_{k+2}}{z_{k+2}}=2\Frac{x_{k+2}-x_{k+1}}{w_{k+2}+w_{k+1}}
$$
and again the second result of Theorem \ref{circle} completes the proof.

\end{proof}

\begin{lemma}
\label{previo}
If, as $\kappa\rightarrow +\infty$, $$\Frac{x_j}{z_j}=1+\Frac{\pi^2}{12\kappa^2}+{\cal O}(\kappa^{-4})$$
for $j=m$ then the
same holds for $j=m\pm 1$, provided all the nodes are inside a fixed compact subset of $(-1,1)$.
\end{lemma}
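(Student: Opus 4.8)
The plan is to run the claimed relation as an induction step, passing from index $m$ to an adjacent index using only the second (trapezoid) result of Theorem \ref{circle} together with the defining recurrence (\ref{rela}) for the secondary nodes. The mechanism is that the trapezoid theorem controls consecutive \emph{differences} of the primary nodes by the corresponding differences of the secondary nodes, so if $x_m/z_m$ already has the stated form, that form is transported to the neighbour with only an ${\cal O}(\kappa^{-4})$ perturbation.

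First I would record the two identities needed. From (\ref{rela}), $z_{m+1}-z_m=\frac{w_m+w_{m+1}}{2}$, and the second result of Theorem \ref{circle}, applied to the pair $x_m,x_{m+1}\in[a,b]$, reads $x_{m+1}-x_m=g\,(z_{m+1}-z_m)$ with $g=1+\frac{\pi^2}{12\kappa^2}+{\cal O}(\kappa^{-4})$. Writing the hypothesis as $x_m=g_m z_m$ with $g_m=1+\frac{\pi^2}{12\kappa^2}+{\cal O}(\kappa^{-4})$, I would then expand
$$ x_{m+1}=x_m+(x_{m+1}-x_m)=g_m z_m+g(z_{m+1}-z_m)=g\,z_{m+1}+(g_m-g)\,z_m, $$
so that
$$ \frac{x_{m+1}}{z_{m+1}}=g+(g_m-g)\,\frac{z_m}{z_{m+1}}. $$
The same computation with $m$ replaced by $m-1$ (using the trapezoid theorem at index $m-1$) gives $x_{m-1}/z_{m-1}=g+(g_m-g)\,z_m/z_{m-1}$.

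The decisive observation is that $g_m-g={\cal O}(\kappa^{-4})$, because both factors share the same expansion through order $\kappa^{-2}$. Hence the perturbation term is ${\cal O}(\kappa^{-4})$ as soon as the ratio $z_m/z_{m\pm1}$ stays bounded, and then $x_{m\pm1}/z_{m\pm1}=1+\frac{\pi^2}{12\kappa^2}+{\cal O}(\kappa^{-4})$, as required. For the outward direction ($m\to m+1$ in the positive half, $m\to m-1$ in the negative half) this boundedness is automatic: the secondary nodes increase in absolute value, so $z_m/z_{m+1}\in(0,1)$, and no division by a vanishing quantity occurs even when the adjacent node tends to $0$.

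The main obstacle is precisely this control of $z_m/z_{m\pm1}$ in the inward direction, where the denominator is the node closer to the origin. I would handle it by noting that for nodes in a fixed compact subset the smallest relevant secondary node is $\Theta(\kappa^{-1})$ — by Remark \ref{obse} and (\ref{w}) the smallest positive secondary node is $\tfrac12 w_{k+1}\sim\frac{\pi}{2\kappa}$ — while consecutive secondary gaps are also $\Theta(\kappa^{-1})$, so that $z_m/z_{m\pm1}$ is bounded above uniformly in $\kappa$. Since the global result is obtained by inducting outward from the smallest positive node (the base case established in the preceding lemma), only the outward direction is actually required, and there the estimate is clean; the inward direction is retained for symmetry and is controlled by the same $\Theta(\kappa^{-1})$ lower bound.
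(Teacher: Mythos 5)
Your proposal is correct and follows essentially the same route as the paper: combine the recurrence (\ref{rela}) with the second (trapezoid) result of Theorem \ref{circle} to transport the expansion from index $m$ to $m\pm 1$, the only difference being bookkeeping (you track the perturbation $(g_m-g)\,z_m/z_{m+1}$ explicitly, while the paper inverts the ratios and adds). Your explicit control of the ratio $z_m/z_{m\pm 1}$ near the origin is a point the paper leaves implicit, and is a welcome extra precaution rather than a divergence in method.
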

\begin{proof}
We assume that $\Frac{x_m}{z_m}=1+\Frac{\pi^2}{12\kappa^2}+{\cal O}(\kappa^{-4})$, consider the
relation (\ref{rela}) and then use the
 second result of Theorem \ref{circle}:
$$
\begin{array}{ll}
z_{m+1}&=z_m+\Frac{w_m+w_{m+1}}{2}\\
&=x_m
\left(1-\Frac{\pi^2}{12\kappa^2}+{\cal O}(\kappa^{-4})\right)+(x_{m+1}-x_m)\left(1-\Frac{\pi^2}{12\kappa^2}+{\cal O}(\kappa^{-4})\right)\\
&=x_{m+1}
\left(1-\Frac{\pi^2}{12\kappa^2}+{\cal O}(\kappa^{-4})\right).
\end{array}
$$
This proves the result for $j=m+1$. That the same holds for $j=m-1$ is obvious.
\end{proof}

\begin{remark}
\label{oddc}
If $n$ is odd $x_{j}=z_{j}=0$ for $j=(n+1)/2$ and for this reason one should exclude the trivial case 
$x_j=0$ in the previous lemma. The same
will be true for the rest of results involving ratios: when the denominator is zero, the numerator is also zero. Instead of singling out this
case for each of these results, we will assume that such case is excluded.
\end{remark}

\begin{corollary}
\label{coro}
Let $[a,b]\subset (-1,1)$ and $x_k,\,z_k \in [a,b]$, then
$$
\Frac{x_k}{z_k}=1+\Frac{\pi^2}{12\kappa^2}+{\cal O}(\kappa^{-3})
$$
as $\kappa \rightarrow +\infty$.
\end{corollary}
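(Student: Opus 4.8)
The plan is to prove the corollary by induction on the node index, marching outward from the smallest positive node while keeping careful track of how the error term accumulates. By the symmetry of the nodes and weights about the origin (Remark \ref{obse}) it is enough to treat positive nodes, and the degenerate case $x_k=z_k=0$ is excluded as agreed in Remark \ref{oddc}. The base case is supplied directly by the first lemma of this subsection, which gives $x_{m_0}/z_{m_0} = 1 + \pi^2/(12\kappa^2) + {\cal O}(\kappa^{-4})$ for the smallest positive node $x_{m_0}$. I would then invoke Lemma \ref{previo} repeatedly to climb from that node up to the target node $x_k$.

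The essential point, and the reason the error term here is only ${\cal O}(\kappa^{-3})$ rather than the ${\cal O}(\kappa^{-4})$ appearing in the two lemmas, is that the number of inductive steps required to reach $x_k$ from $x_{m_0}$ is ${\cal O}(\kappa)$, since the number of Legendre nodes in any fixed subinterval of $(-1,1)$ grows like ${\cal O}(\kappa)$, and each step injects a fresh ${\cal O}(\kappa^{-4})$ error. To make this quantitative I would write $x_m/z_m = 1 + \pi^2/(12\kappa^2) + \eta_m$ and redo the computation of Lemma \ref{previo} keeping $\eta_m$ explicit. Using the recursion (\ref{rela}) together with the second part of Theorem \ref{circle}, and the fact that the node gap satisfies $x_{m+1}-x_m = {\cal O}(\kappa^{-1})$ (so that its product with the ${\cal O}(\kappa^{-4})$ term from Theorem \ref{circle} is negligible), one arrives at $z_{m+1} = x_{m+1}(1 - \pi^2/(12\kappa^2)) - x_m\eta_m + {\cal O}(\kappa^{-5})$. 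Dividing by $x_{m+1}$ and inverting yields a recursion of the form $\eta_{m+1} = (x_m/x_{m+1})\eta_m + {\cal O}(\kappa^{-4})$, whose implied constant is uniform over $[a,b]$ because the error in Theorem \ref{circle} is uniform there.

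Since the nodes in play are positive and increasing, the amplification factor obeys $0 < x_m/x_{m+1} \le 1$, so the accumulated error never grows multiplicatively and is governed by simple addition: $|\eta_k| \le |\eta_{m_0}| + ({\cal O}(\kappa))\cdot{\cal O}(\kappa^{-4}) = {\cal O}(\kappa^{-3})$, which is exactly the asserted estimate. The main obstacle is precisely this error bookkeeping. One must verify that the per-step constant in Lemma \ref{previo} does not depend on $m$, so that ${\cal O}(\kappa)$ of these contributions may legitimately be summed, and that every intermediate node of the chain remains inside the fixed compact set $[0,b]\subset(-1,1)$, which is what guarantees both the uniformity of the constants and the bound $x_m/x_{m+1}\le 1$ throughout.
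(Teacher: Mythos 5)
Your proposal is correct and follows essentially the same route as the paper: the corollary is obtained by starting from the smallest positive node (the base-case lemma) and iterating Lemma \ref{previo}, with the error degrading from ${\cal O}(\kappa^{-4})$ to ${\cal O}(\kappa^{-3})$ precisely because the relation is propagated through ${\cal O}(\kappa)$ nodes, each step adding ${\cal O}(\kappa^{-4})$ --- exactly the "$n$ times ${\cal O}(\kappa^{-4})$" accounting the paper states after the corollary. Your explicit recursion for $\eta_m$ with the amplification factor $x_m/x_{m+1}\le 1$ is a more detailed bookkeeping of the same argument, which the paper leaves implicit.
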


The error term in the previous Corollary
 is $n$ times ${\cal O}(\kappa^{-4})$, which gives ${\cal O}(\kappa^{-3})$.
 However, it appears that it is in fact ${\cal O}(\kappa^{-4})$, 
 as the Bessel-type expansions will indicate.

 Corollary \ref{coro}, similarly to Theorem \ref{circle}, holds
for nodes in a compact interval inside $(-1,1)$. However, we can check that the quantities $x_i/z_i-1$
are in fact ${\cal O}(\kappa^{-2})$ for all the nodes in $(-1,1)$ and, as the next results suggest, with an asymptotic constant not larger than
$\pi^2 /12$. For this purpose, we use the Bessel-type expansions of section \ref{Bessel}.

We consider the first equality in (\ref{secon}) together with the expansions (\ref{dos}) and (\ref{dosdos}). We have
$$
\Frac{x_{n-i}}{z_{n-i}}-1=\Frac{1}{\kappa^2}C_i+{\cal O}(\kappa^{-4}),
$$
with
\begin{equation}
C_i=2\displaystyle\sum_{k=0}^{i-1}J_1 (j_{k+1})^{-2}+J_1 (j_{i+1})^{-2}-\Frac{j_{i+1}^2}{2},
\end{equation}
where the sum is assumed to be zero for $i=0$.

For the largest zero we have $C_0=0.8187877\ldots$,
which is quite close to the error constant for the smallest nodes,
which is $C=\pi^2/12=0.8224670\ldots$ In fact, the following conjecture appears to be true, as numerical experiments
show.
\begin{conjecture}
\label{conjc}
The sequence $\{C_i\}_{i=0}^{+\infty}$ is positive, increasing and with limit $\displaystyle\lim_{i\rightarrow \infty}
C_i=\Frac{\pi^2}{12}$.
\end{conjecture}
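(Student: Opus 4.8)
The plan is to reduce all three assertions to properties of the sequences $a_i$ and $b_i$ introduced after Theorem \ref{circle}, by first observing that the increments of $C_i$ are governed precisely by the second trapezoid relation. First I would compute $C_{i+1}-C_i$ directly from the formula for $C_i$. The sum telescopes, leaving
$$
C_{i+1}-C_i=J_1(j_{i+1})^{-2}+J_1(j_{i+2})^{-2}-\Frac{j_{i+2}^2-j_{i+1}^2}{2}.
$$
The combination $J_1(j_{i+1})^{-2}+J_1(j_{i+2})^{-2}$ is exactly the one appearing in the discussion of the second result of Theorem \ref{circle}: with $b_i$ defined there through $\frac{w_{n-i}+w_{n-i-1}}{2(x_{n-i}-x_{n-i-1})}\sim b_i+1$, one has $J_1(j_{i+1})^{-2}+J_1(j_{i+2})^{-2}=(1+b_i)\,(j_{i+2}^2-j_{i+1}^2)/2$. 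Substituting yields the clean identity
$$
C_{i+1}-C_i=\Frac{b_i}{2}\left(j_{i+2}^2-j_{i+1}^2\right).
$$

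Since the Bessel zeros are strictly increasing, $j_{i+2}^2-j_{i+1}^2>0$, so \emph{monotonicity of $\{C_i\}$ is equivalent to the positivity of $b_i$} — that is, to the statement that the second relation of Theorem \ref{circle} is violated at the nodes nearest the endpoints. This is precisely the numerically verified property of $b_i$ recorded in Section \ref{sectcir}. Granting $b_i>0$, positivity of the whole sequence then comes for free, because $C_i\ge C_0=J_1(j_1)^{-2}-j_1^2/2=0.8187\ldots>0$.

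For the limit I would insert into the formula for $C_i$ the McMahon expansion $j_{k+1}=\mu_k+\frac{1}{8\mu_k}+{\cal O}(\mu_k^{-3})$, $\mu_k=(k+3/4)\pi$ \cite[10.21.19]{NIST}, together with the large-$k$ expansion of $J_1(j_{k+1})^2$ of \cite[Eq. (4.1)]{Bogaert:2014:IFC} — equivalently the expansion $a_k=\tfrac18 j_{k+1}^{-2}+{\cal O}(j_{k+1}^{-4})$ that underlies (\ref{limi4}) — writing $2J_1(j_{k+1})^{-2}=\pi j_{k+1}(1-a_k)$. Applying Euler--Maclaurin to the partial sum, the leading ${\cal O}(i^2)$ term cancels against $-j_{i+1}^2/2$, the ${\cal O}(i)$ terms cancel, and, crucially, the two ${\cal O}(\ln i)$ contributions cancel as well: the McMahon correction inside $\pi\sum j_{k+1}$ contributes $+\tfrac18\ln i$, while the $-\tfrac{\pi}{8}\sum j_{k+1}^{-1}$ correction coming from $a_k$ contributes $-\tfrac18\ln i$. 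The surviving constant is $\pi^2/12$. Conceptually this value is forced: $x_{n-i}/z_{n-i}-1$ is a single quantity, described in the interior by Corollary \ref{coro} with constant $\pi^2/12$ and near the endpoints by the Bessel expansion with constant $C_i$; in the overlap region $1\ll i\ll n$ both representations are valid, so $C_i\to\pi^2/12$.

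The hard part is the rigorous proof that $b_i>0$ for every $i$ (and hence monotonicity, and hence positivity): so far this is only checked numerically. A complete argument would require Bessel-function inequalities comparing the two-zero combination $J_1(j_{i+1})^{-2}+J_1(j_{i+2})^{-2}$ with $(j_{i+2}^2-j_{i+1}^2)/2$, in the spirit of the Sonin-type estimates used for $a_i$ in Appendix \ref{Sonin}, but now coupling two consecutive zeros, which is considerably more delicate. By contrast, the limit is an essentially mechanical asymptotic computation whose only subtlety is the exact cancellation of the growing and logarithmic terms described above.
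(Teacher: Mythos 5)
First, be aware that the paper does not prove this statement either: it is stated as a conjecture, supported by numerical experiments, and the only rigorous ingredient supplied is Appendix~\ref{apc}, which shows that $\lim_{i\to\infty}C_i$ exists and is finite by proving that the increments $s_k=C_k-C_{k-1}$ are ${\cal O}(k^{-2})$. Your telescoped increment is exactly the paper's $s_{i+1}$, so that part of your analysis coincides with Appendix~\ref{apc}; the rewriting $C_{i+1}-C_i=\tfrac{1}{2}b_i\,(j_{i+2}^2-j_{i+1}^2)$ is correct and is a genuinely useful observation the paper does not make explicit, since it shows that the conjectured monotonicity of $\{C_i\}$ and the conjectured positivity of the trapezoid defects $b_i$ (both recorded in the paper only as numerical facts) are one and the same unproved statement, with positivity of the whole sequence then following from $C_0>0$. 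You are right to flag $b_i>0$ as the hard, still-open part.

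On the limit you go further than the paper, which only establishes finiteness. Your cancellation structure checks out: the ${\cal O}(i^2)$ and ${\cal O}(i)$ terms cancel once the lone $J_1(j_{i+1})^{-2}$ term (worth $\sim\pi^2 i/2$) is included alongside the partial sum and $-j_{i+1}^2/2$, and the two $\tfrac18\ln i$ contributions do cancel as you describe. But the surviving constant is asserted rather than computed: it receives contributions from the Euler--Maclaurin boundary terms, from the constant accompanying $\sum 1/(k+3/4)$, from the cross term $-\tfrac18$ in $-j_{i+1}^2/2$, and from the convergent tails of the McMahon and $a_k$ corrections, and verifying that these sum to exactly $\pi^2/12$ is far from "mechanical." Your matching argument is the cleaner route and is the one implicit in the paper's remark that $C_0$ is close to $\pi^2/12$, but it is only heuristic: Corollary~\ref{coro} carries an accumulated ${\cal O}(\kappa^{-3})$ error and the Bessel expansions (\ref{dos}) and (\ref{dosdos}) are only claimed for $i\ll n$, so the overlap argument would itself need uniform error control to be rigorous. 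Net assessment: your proposal is consistent with the paper, adds a worthwhile structural reduction (monotonicity $\Leftrightarrow$ $b_i>0$) and a plausible route to the value of the limit, but, like the paper, it does not prove the conjecture, and you state this honestly.
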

In the Appendix \ref{apc} we prove that the limit $C_{+\infty}$ is finite.

Conjecture \ref{conjc} leads us to the following conjecture.
\begin{conjecture}
\label{mainc1}
The primary and secondary nodes of Gauss-Legendre quadrature of degree $n$ are such that
$$
0<\Frac{x_i}{z_i}-1<\Frac{\pi^2}{12\kappa^2},\,i=1,\ldots n,
$$
with $\pi^2/12$ the best possible asymptotic constant.
\end{conjecture}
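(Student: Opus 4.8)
The plan is to reduce the entire statement to the behaviour of the single sequence $\{C_i\}$, and then to the positivity of the trapezoid constants $b_i$ of Section \ref{sectcir}, following the same two-regime strategy used for the uniform circle theorem (Conjecture \ref{uct}). By the symmetry of nodes and weights about the origin it suffices to treat the non-negative nodes, and by Remark \ref{oddc} the trivial central node is excluded. For nodes in a fixed compact subinterval $[a,b]\subset(-1,1)$, Corollary \ref{coro} already gives $x_k/z_k-1=\pi^2/(12\kappa^2)+{\cal O}(\kappa^{-4})$; for the nodes close to $\pm1$ the Bessel-type expansions (\ref{dos}) and (\ref{dosdos}) give $x_{n-i}/z_{n-i}-1=C_i/\kappa^2+{\cal O}(\kappa^{-4})$. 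Thus both inequalities of the conjecture will follow once it is shown that $0<C_i<\pi^2/12$ for every finite $i$, together with $\lim_{i}C_i=\pi^2/12$, the finiteness of which is established in Appendix \ref{apc}.

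The key step is the monotonicity of $\{C_i\}$, which I would obtain by relating it to the trapezoid constants $b_i$. Computing the forward difference,
$$
C_{i+1}-C_i=\frac{1}{J_1(j_{i+1})^2}+\frac{1}{J_1(j_{i+2})^2}-\frac{j_{i+2}^2-j_{i+1}^2}{2},
$$
and inserting the identity $J_1(j_{i+1})^{-2}+J_1(j_{i+2})^{-2}=(b_i+1)(j_{i+2}^2-j_{i+1}^2)/2$ that defines $b_i$, one finds $C_{i+1}-C_i=b_i(j_{i+2}^2-j_{i+1}^2)/2$. Since $j_{i+2}>j_{i+1}$, the sequence $\{C_i\}$ is increasing precisely when $b_i>0$; so the still-conjectural monotonicity of $\{C_i\}$ is \emph{equivalent} to the positivity of the trapezoid constants. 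Granting $b_i>0$, the lower bound $C_i>0$ then follows from $C_0=0.8187\ldots>0$ by induction, and the upper bound $C_i<\pi^2/12$ follows from monotonicity together with $\lim_i C_i=\pi^2/12$; the same limit shows that $\pi^2/12$ is the best possible asymptotic constant, since the coefficients accumulate exactly there.

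For the nodes in a compact subinterval the lower bound is immediate from the positive leading coefficient $\pi^2/12$, while the upper bound is the delicate point: here the elementary expansion only exhibits the \emph{limiting} coefficient $\pi^2/12$, which coincides with the claimed bound, so strictness cannot be decided at this order. The clean resolution is to note that in a genuinely uniform treatment every interior node would also carry the coefficient $C_i$ with $i$ large, which is strictly below $\pi^2/12$; the elementary expansion simply loses this gap. Absent such a treatment, one could instead compute the ${\cal O}(\kappa^{-4})$ term for the interior nodes and check that its coefficient is negative.

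The main obstacle is exactly the one flagged for the uniform circle theorem. First, the positivity of $b_i$ (hence the monotonicity of $\{C_i\}$) has so far only been verified numerically; a rigorous proof would presumably go through Sonin-type inequalities for Bessel functions, in the spirit of Appendix \ref{Sonin}. Second, and more fundamentally, the argument splices two asymptotic expansions that do not individually cover all nodes at once, and the re-expansion leading to (\ref{dos}) and (\ref{dosdos}) is valid only for $i\ll n$. A single uniform expansion, valid for all nodes and weights simultaneously and exhibiting the true coefficient $C_i<\pi^2/12$ throughout, would remove both the regime-splicing and the interior strictness issue at once; producing it is the genuinely hard and still-open part.
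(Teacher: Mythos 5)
Your proposal follows the same route the paper itself takes: this statement is a conjecture, and the paper's own support for it is exactly the splice you describe --- Corollary~\ref{coro} (elementary expansions) for nodes in a fixed compact subinterval, the Bessel-type expansions (\ref{dos}) and (\ref{dosdos}) giving $x_{n-i}/z_{n-i}-1=C_i\kappa^{-2}+{\cal O}(\kappa^{-4})$ near the endpoints, and then the conjectured positivity, monotonicity and limit $\pi^2/12$ of $\{C_i\}$ (Conjecture~\ref{conjc}), of which only the finiteness of the limit is actually proved (Appendix~\ref{apc}). The one genuinely new element in your write-up is the exact telescoping identity
$$
C_{i+1}-C_i=\frac{1}{J_1(j_{i+1})^2}+\frac{1}{J_1(j_{i+2})^2}-\frac{j_{i+2}^2-j_{i+1}^2}{2}=b_i\,\frac{j_{i+2}^2-j_{i+1}^2}{2},
$$
which is correct and shows that the monotonicity of $\{C_i\}$ is \emph{equivalent} to the positivity of the trapezoid constants $b_i$ of Section~\ref{sectcir}; the paper verifies both facts numerically but does not record that they are the same fact, so this reduction is a worthwhile addition (it collapses two unproved statements into one, and suggests a Sonin-type attack as in Appendix~\ref{Sonin}). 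You are also right, and more explicit than the paper, that for interior nodes the leading coefficient equals the claimed bound $\pi^2/12$ exactly, so strictness of the upper inequality cannot be read off at that order. The gaps you flag --- unproved positivity of $b_i$/monotonicity of $C_i$, and the lack of a single expansion uniform over all nodes --- are precisely the reasons the statement remains a conjecture in the paper; your proposal does not close them, but it does not claim to.
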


Conjecture \ref{mainc1}
can be used to check that the primary nodes and the secondary intermediate nodes are
interlaced, that is $\bar{z}_0<x_1<\bar{z}_1<x_2<\cdots <\bar{z}_{n-1}<x_n<
\bar{z}_n$.

\begin{conjecture}
\label{interlace-result}
The primary nodes $\{x_i\}$ and the secondary
intermediate nodes $\{\bar{z}_i\}$ are interlaced, at least asymptotically.
\end{conjecture}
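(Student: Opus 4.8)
The plan is to reduce the interlacing chain $\bar{z}_0<x_1<\bar{z}_1<\cdots<\bar{z}_{n-1}<x_n<\bar{z}_n$ to a single family of gap inequalities tying each primary node to its secondary counterpart. Since $z_i=(\bar{z}_{i-1}+\bar{z}_i)/2$ by definition and $\bar{z}_i-\bar{z}_{i-1}=w_i$ by (\ref{depr}), we have $\bar{z}_{i-1}=z_i-\frac{1}{2}w_i$ and $\bar{z}_i=z_i+\frac{1}{2}w_i$, so the two-sided condition $\bar{z}_{i-1}<x_i<\bar{z}_i$ is exactly $|x_i-z_i|<\frac{1}{2}w_i$. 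It therefore suffices to establish this bound for every $i$, and by the symmetries $x_{n+1-i}=-x_i$, $z_{n+1-i}=-z_i$, $w_{n+1-i}=w_i$ it is enough to treat the nonnegative nodes $x_i\geq 0$. For these Conjecture \ref{mainc1} gives $z_i<x_i$, so the left inequality $z_i-\frac{1}{2}w_i<x_i$ holds automatically and only $x_i-z_i<\frac{1}{2}w_i$ remains to be proved.

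First I would feed the upper bound of Conjecture \ref{mainc1} into this remaining inequality. Writing $x_i-z_i=z_i\left(\frac{x_i}{z_i}-1\right)$ and using $0<\frac{x_i}{z_i}-1<\frac{\pi^2}{12\kappa^2}$ together with $0\leq z_i<1$ yields $x_i-z_i<\frac{\pi^2}{12\kappa^2}z_i$. Hence interlacing follows as soon as one has the uniform lower bound on the weights $w_i>\frac{\pi^2}{6\kappa^2}z_i$ for all $i$. This is the genuine content of the statement: it compares the ${\cal O}(\kappa^{-2})$ displacement of the secondary node from the primary node with half the surrounding weight, and asserts that the weight always wins.

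I would verify this sufficient weight bound by locating its tightest instance. Away from the endpoints the circle theorem (first item of Theorem \ref{circle}, equivalently (\ref{w})) gives $w_i\sim\frac{\pi}{\kappa}\sqrt{1-x_i^2}={\cal O}(\kappa^{-1})$, which dwarfs the ${\cal O}(\kappa^{-2})$ right-hand side. Since the weights are smallest and the secondary nodes closest to $\pm 1$ at the extreme node, the bound is tightest there; and as the circle theorem fails in that region I would use instead the Bessel-type expansion (\ref{dosdos}), $w_n\sim\frac{2}{\kappa^2 J_1(j_1)^2}$ with $z_n=1-\frac{1}{2}w_n\to 1$, which reduces the inequality to $\frac{2}{J_1(j_1)^2}>\frac{\pi^2}{6}$, i.e. $J_1(j_1)^2<\frac{12}{\pi^2}=1.2159\ldots$. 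As $J_1(j_1)^2=0.2695\ldots$, this holds with a wide margin, and for the remaining near-endpoint nodes the margin only grows because $J_1(j_{i+1})^2\to 0$. (At the extreme node the conclusion is in fact immediate, since $\bar{z}_n=1>x_n$.)

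The hard part, as with the earlier uniform results, is that a genuinely rigorous argument would require a single uniform asymptotic treatment of all nodes at once, rather than patching the compact-interval estimate (\ref{w}) to the endpoint estimate (\ref{dosdos}) through an intermediate transition region; moreover the entire reduction is conditional on the still unproven Conjecture \ref{mainc1}. For these reasons the interlacing can only be asserted asymptotically and retains its conjectural status, even though the numerical margin at the extreme node leaves no practical doubt about its validity.
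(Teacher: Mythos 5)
Your reduction of the interlacing to $|x_i-z_i|<w_i/2$, the use of Conjecture \ref{mainc1} to bound the displacement by $\pi^2/(12\kappa^2)$, and the split between the elementary weight asymptotics for interior nodes and the Bessel-type expansion (\ref{dosdos}) at the extremes is exactly the paper's argument; your final check $J_1(j_1)^2<12/\pi^2$ is the same inequality as the paper's verification that $D_0=\pi^2 J_1(j_1)^2/12=0.2216\ldots<1$. The only cosmetic differences are that you make the symmetry reduction and the one-sided nature of the bound for nonnegative nodes explicit.
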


\begin{proof}
We are proving the interlacing asymptotically, that is, as $\kappa \rightarrow +\infty$, and
assuming Conjecture \ref{mainc1}. However the result
appears to hold for any $n$.

We are checking that in each interval $(\bar{z}_{j-1},\bar{z}_{j})$, $j=1,\ldots n-1$
there is one primary node $x_{j}$; because there
are $n+1$ intermediate nodes (including $\pm 1$) and
$n$ primary nodes, this proves interlacing. For proving this, and because $\bar{z}_{j}-\bar{z}_{j-1}=w_j$, we need to prove that $|x_j-z_j|/(w_j/2)$ is smaller than $1$, where
$z_j=(\bar{z}_{j-1}+\bar{z}_{j})/2$ is the middle point of the
interval  $(\bar{z}_{j-1},\bar{z}_{j})$.

We use that $x_j/z_j-1<\Frac{\pi^2}{12\kappa^2}$ (conjecture \ref{mainc1}) and then
\begin{equation}
\label{bodis}
|x_j-z_j|<|z_j|\Frac{\pi^2}{12\kappa^2}<\Frac{\pi^2}{12\kappa^2},
\end{equation}
and considering the elementary asymptotic for $w_j$ we have
$$
\Frac{2|x_j-z_j|}{w_j}<\Frac{\pi^2}{6\kappa^2 w_j}\sim \Frac{\pi}{6\kappa \sin\alpha_j},
$$
which is smaller than $1$ for large enough $\kappa$ and fixed $j$.
This would prove the result under the same conditions as in Theorem \ref{circle}, but
not for the extreme nodes.

For proving that this holds in all the
interval $(-1,1)$ we have to rely on the Bessel expansions for the extreme nodes 
again. The distance
$|x_j-z_j|$ can be uniformly bounded
in all the interval using (\ref{bodis}); however, the length of the interval
$(\bar{z}_{j-1},\bar{z}_{j})$ 
decreases because the positive weights $w_j$ decrease with increasing $j$, and we expect that the most problematic
case is for $j=n$.

Using Bessel asymptotics for $w_{n-i}$ (Eq. (\ref{w}))
$$
\Frac{2|x_{n-i}-z_{n-i}|}{w_{n-i}}\lesssim \Frac{\pi^2 J_1(j_{i+1})^2}{12}\equiv D_i.
$$

For the last interval ($i=0$) we have $D_0=0.2216664\ldots$, for the following $D_1=0.0952253\ldots$ and
$D_i$ tends to zero as $i$ increases. 

Considering the result proved in Appendix \ref{Sonin} we have
$$
D_i< \Frac{\pi}{6j_{i+1}}={\cal O}(i^{-1}).
$$
\end{proof}

\subsection{Relation between the primary and secondary intermediate nodes}

The relation between the intermediate nodes can be shown in a similar way as in the previous section, but in this case
we will use the third result in Theorem \ref{circle} instead of the second. We first prove the following

\begin{lemma}
Let $\{\bar{x}_i\}$ be the smallest positive intermediate node, then
$$
\Frac{\bar{x}_i}{\bar{z}_i}=1-\Frac{\pi^2}{6\kappa^2}+{\cal O}(\kappa^{-4}).
$$
\end{lemma}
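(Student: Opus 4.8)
The plan is to follow almost verbatim the structure of the first lemma in the preceding subsection (the one giving $x_i/z_i = 1 + \pi^2/(12\kappa^2) + {\cal O}(\kappa^{-4})$ for the smallest positive node), but with the \emph{third} result of Theorem \ref{circle} playing the role that the second result played there. This lemma is the base case of the induction that will be completed by an analogue of Lemma \ref{previo}; here I only need to treat the smallest positive primary intermediate node $\bar{x}_i$. The whole argument consists of identifying the correct index of the smallest positive $\bar{x}_i$ and of the smallest positive $\bar{z}_i$ according to the parity of $n$, using the symmetry of the nodes and weights about $x=0$ to rewrite $\bar{x}_i/\bar{z}_i$ in the form $(x_{j+1}-x_{j-1})/(2w_j)$, and then quoting the third part of Theorem \ref{circle}. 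Since the nodes involved are the central ones, they lie in a fixed compact subinterval of $(-1,1)$ and the hypotheses of Theorem \ref{circle} are met.

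First I would dispose of the even case $n=2k$. By symmetry $x_k=-x_{k+1}$, so $\bar{x}_k=(x_k+x_{k+1})/2=0$ and the smallest positive primary intermediate node is $\bar{x}_{k+1}=(x_{k+1}+x_{k+2})/2$; by Remark \ref{obse} the corresponding secondary intermediate node is $\bar{z}_{k+1}=w_{k+1}$. Writing $x_{k+1}=-x_k$ turns the numerator into $(x_{k+2}-x_k)/2$, so that
$$
\Frac{\bar{x}_{k+1}}{\bar{z}_{k+1}}=\Frac{x_{k+2}-x_k}{2w_{k+1}},
$$
which is exactly the third result of Theorem \ref{circle} with $i=k+1$, giving $1-\pi^2/(6\kappa^2)+{\cal O}(\kappa^{-4})$.

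For the odd case $n=2k+1$ the centre node is $x_{k+1}=0$. From $z_{k+1}=0$ (Remark \ref{obse}) and the recurrence $\bar{z}_{k+1}-\bar{z}_k=w_{k+1}$ together with $z_{k+1}=(\bar{z}_k+\bar{z}_{k+1})/2$, I obtain $\bar{z}_{k+1}=w_{k+1}/2$. Meanwhile $\bar{x}_k=x_k/2<0$ and $\bar{x}_{k+1}=(0+x_{k+2})/2=x_{k+2}/2$ is the smallest positive primary intermediate node. Using $x_k=-x_{k+2}$ gives
$$
\Frac{\bar{x}_{k+1}}{\bar{z}_{k+1}}=\Frac{x_{k+2}/2}{w_{k+1}/2}=\Frac{x_{k+2}-x_k}{2w_{k+1}},
$$
again the third result of Theorem \ref{circle} with $i=k+1$, completing the proof.

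There is no genuine analytic obstacle here: all the asymptotic content is already packaged in Theorem \ref{circle}, and the work is purely the index bookkeeping. The one place demanding care — the ``hard part,'' such as it is — is the identification of $\bar{z}_{k+1}$ in the odd case, which is not stated outright in Remark \ref{obse} for the secondary \emph{intermediate} nodes and must be extracted from $z_{k+1}=0$ via the recurrence $\bar{z}_i=\bar{z}_{i-1}+w_i$; one must also confirm in each parity that the symmetry relation converts the average $\bar{x}_{k+1}$ into the symmetric difference $(x_{k+2}-x_k)/2$ with the correct indices.
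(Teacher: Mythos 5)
Your proof is correct and follows essentially the same route as the paper's: split by the parity of $n$, use symmetry to identify the smallest positive $\bar{x}_i$ and the corresponding $\bar{z}_i$ (namely $w_{k+1}$ for $n=2k$ and $w_{k+1}/2$ for $n=2k+1$), rewrite the ratio as $(x_{k+2}-x_k)/(2w_{k+1})$, and invoke the third part of Theorem \ref{circle}. The index bookkeeping, including the derivation of $\bar{z}_{k+1}=w_{k+1}/2$ in the odd case from $z_{k+1}=0$ and the recurrence, matches the paper's argument.
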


\begin{proof}
Let $n=2k$, $k\in {\mathbb N}$; by symmetry we have that
$\bar{x}_k=\bar{z}_k=0$ and then, using (\ref{prom}),
 the smallest positive intermediate node $\bar{x}_{k+1}$ is such that
$$
\bar{x}_{k+1}=\Frac{x_{k+2}-x_k}{2},
$$
and by the third result of Theorem \ref{circle}
$$
\bar{x}_{k+1}=w_{k+1}\left(1-\Frac{\pi^2}{6\kappa^2}+{\cal O}(\kappa^{-4})\right).
$$
Because $\bar{z}_{k+1}=\bar{z}_k + w_{k+1}=w_{k+1}$ the result is proved for $n$ even.

Take now $n=2k+1$, $k\in {\mathbb N}$.
Then $x_k=-x_{k+2}$ and $x_{k+1}=0$ and by the third result of
 Theorem \ref{circle}
 $$
 \Frac{x_{k+2}-x_k}{2w_{k+1}}=\Frac{x_{k+2}}{w_{k+1}}=\Frac{2\bar{x}_{k+1}}{w_{k+1}}.
 $$
 Now, by symmetry $\bar{z}_{k}=-\bar{z}_{k+1}$ and because $\bar{z}_{k+1}=\bar{z}_k + w_{k+1}$,
 $\bar{z}_{k+1}=w_{k+1}/2$. Therefore
 $$
 \Frac{x_{k+2}-x_k}{2w_{k+1}}=\Frac{\bar{x}_{k+1}}{\bar{z}_{k+1}},
 $$
 and applying the third result of Theorem \ref{circle} to the left-hand side
 we have the desired result.
\end{proof}

\begin{lemma}
\label{previo2}
If, as $\kappa\rightarrow +\infty$, $$\Frac{\bar{x}_j}{\bar{z}_j}=1-\Frac{\pi^2}{6\kappa^2}+{\cal O}(\kappa^{-4})$$
for $j=m$ then the
same holds for $j=m\pm 1$, provided all the nodes are inside a fixed compact subset of $(-1,1)$.
\end{lemma}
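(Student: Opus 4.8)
The plan is to follow the inductive scheme used in the proof of Lemma \ref{previo}, replacing the primary-node ingredients by their intermediate-node analogues. The two structural identities I will use are the recurrence (\ref{prom}), $\bar{x}_{m+1}=\bar{x}_m+(x_{m+2}-x_m)/2$, and the recurrence $\bar{z}_{m+1}=\bar{z}_m+w_{m+1}$ coming from the definition of the secondary intermediate nodes. The single asymptotic input will be the \emph{third} relation of Theorem \ref{circle} (in place of the second one used for Lemma \ref{previo}), namely $(x_{i+1}-x_{i-1})/(2w_i)=1-\pi^2/(6\kappa^2)+{\cal O}(\kappa^{-4})$.

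First I would rewrite the inductive hypothesis $\bar{x}_m/\bar{z}_m=1-\pi^2/(6\kappa^2)+{\cal O}(\kappa^{-4})$ in the equivalent form $\bar{z}_m=\bar{x}_m\,(1+\pi^2/(6\kappa^2)+{\cal O}(\kappa^{-4}))$. Next, applying the third relation of Theorem \ref{circle} with $i=m+1$ gives $w_{m+1}=((x_{m+2}-x_m)/2)(1+\pi^2/(6\kappa^2)+{\cal O}(\kappa^{-4}))$, and by (\ref{prom}) the factor $(x_{m+2}-x_m)/2$ equals $\bar{x}_{m+1}-\bar{x}_m$. Substituting both expressions into $\bar{z}_{m+1}=\bar{z}_m+w_{m+1}$, the common correction factor $1+\pi^2/(6\kappa^2)$ multiplies $\bar{x}_m+(\bar{x}_{m+1}-\bar{x}_m)=\bar{x}_{m+1}$, giving $\bar{z}_{m+1}=\bar{x}_{m+1}\,(1+\pi^2/(6\kappa^2)+{\cal O}(\kappa^{-4}))$; inverting this ratio yields the claim for $j=m+1$. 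This is precisely the telescoping cancellation that made Lemma \ref{previo} work.

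The only point that needs a little care is the bookkeeping of error terms. On the fixed compact subinterval, $\bar{x}_m$ and $\bar{x}_{m+1}$ are ${\cal O}(1)$, whereas the spacing $\bar{x}_{m+1}-\bar{x}_m=(x_{m+2}-x_m)/2$ is ${\cal O}(\kappa^{-1})$; hence the relative ${\cal O}(\kappa^{-4})$ error carried by $w_{m+1}$ degrades only to an absolute ${\cal O}(\kappa^{-5})$, while the ${\cal O}(\kappa^{-4})$ error carried by $\bar{z}_m$ is the dominant absolute term, and dividing the sum by the ${\cal O}(1)$ quantity $\bar{x}_{m+1}$ preserves the stated ${\cal O}(\kappa^{-4})$. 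The compactness hypothesis is exactly what keeps these nodes bounded away from $\pm 1$, so that Theorem \ref{circle} applies and the ${\cal O}(1)$ estimates hold. Finally, the case $j=m-1$ follows by reading the same identities backwards (or, more simply, from the symmetry of the nodes about the origin), just as in Lemma \ref{previo}.

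I do not anticipate any genuine obstacle: the argument is a direct transcription of Lemma \ref{previo} with the third trapezoid relation in place of the second. The only real risk is a bookkeeping slip---mismatching a sign when inverting the ratio, or conflating the relative and absolute orders of the error---both of which are controlled by the node-spacing estimate ${\cal O}(\kappa^{-1})$ noted above.
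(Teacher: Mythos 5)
Your proof is correct and follows essentially the same route as the paper's: the recurrences (\ref{prom}) and $\bar{z}_{m+1}=\bar{z}_m+w_{m+1}$ combined with the third relation of Theorem \ref{circle} to telescope the common factor $1\mp\pi^2/(6\kappa^2)$, the only cosmetic difference being that you substitute into the $\bar{z}$-recurrence and invert at the end, whereas the paper substitutes into the $\bar{x}$-recurrence directly. Your extra bookkeeping of absolute versus relative errors is a harmless refinement of the same argument.
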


\begin{proof}

Considering the property (\ref{prom}) we have
$$
\bar{x}_{m+1}=\bar{x}_m+\Frac{x_{m+2}-x_m}{2}.
$$
Now, because we are assuming that
$\Frac{\bar{x}_m}{\bar{z}_m}=1-\Frac{\pi^2}{6\kappa^2}+{\cal O}(\kappa^{-4})$, using the third result of Theorem \ref{circle} we have
$$
\bar{x}_{m+1}=\bar{z}_m
\left(1-\Frac{\pi^2}{6\kappa^2}+{\cal O}(\kappa^{-4})\right)
+w_{m+1}\left(1-\Frac{\pi^2}{6\kappa^2}+{\cal O}(\kappa^{-4})\right)
$$
and because $\bar{z}_{m+1}=\bar{z}_m+w_{m+1}$ the result is proved for $j=m+1$.
\end{proof}

As a consequence, and similarly as we proved for the primary and secondary nodes in the
previous section, we have

\begin{corollary}
\label{coro2}
Let $[a,b]\subset (-1,1)$ and $x_k,\,z_k \in [a,b]$, then
$$
\Frac{\bar{x}_k}{\bar{z}_k}=1-\Frac{\pi^2}{6\kappa^2}+{\cal O}(\kappa^{-3})
$$
as $k\rightarrow +\infty$.
\end{corollary}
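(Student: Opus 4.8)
The plan is to prove Corollary~\ref{coro2} by the same induction that yields Corollary~\ref{coro}, now driven by the third relation of Theorem~\ref{circle} rather than the second, with the intermediate nodes $\bar{x}_i,\bar{z}_i$ replacing the primary and secondary nodes. Both ingredients are already assembled: the Lemma immediately preceding Lemma~\ref{previo2} supplies the base case, namely that $\bar{x}_i/\bar{z}_i=1-\pi^2/(6\kappa^2)+{\cal O}(\kappa^{-4})$ holds for the smallest positive intermediate node with error ${\cal O}(\kappa^{-4})$; and Lemma~\ref{previo2} supplies the propagation step, carrying the relation from index $m$ to $m\pm1$ with the same ${\cal O}(\kappa^{-4})$ error, provided the nodes involved stay in a fixed compact subset of $(-1,1)$.

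First I would fix $[a,b]\subset(-1,1)$ together with an index $k$ such that $\bar{x}_k\in[a,b]$, and set $b'=\max(|a|,|b|)<1$. Starting from the central (smallest positive) intermediate node, I would apply the step of Lemma~\ref{previo2} repeatedly, moving outward one index at a time until reaching index $k$. The point is that every intermediate node traversed along this chain lies in $[-b',b']$, a compact subinterval of $(-1,1)$ on which the elementary expansions~(\ref{x}) and~(\ref{w}) --- and hence the relations of Theorem~\ref{circle} invoked inside Lemma~\ref{previo2} --- are valid uniformly. From~(\ref{x}), consecutive nodes in $[-b',b']$ are separated by a distance of order $(\pi/\kappa)\sin\alpha_j$, bounded below by a positive multiple of $1/\kappa$ there; hence reaching index $k$ from the centre requires at most ${\cal O}(\kappa)$ applications of the inductive step.

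Each application introduces a fresh error ${\cal O}(\kappa^{-4})$, and these accumulate additively, so after ${\cal O}(\kappa)$ steps the total error is ${\cal O}(\kappa)\cdot{\cal O}(\kappa^{-4})={\cal O}(\kappa^{-3})$, giving the stated bound. This is the identical accounting that produces the ${\cal O}(\kappa^{-3})$ term in Corollary~\ref{coro}; as remarked there, the genuine error is presumably ${\cal O}(\kappa^{-4})$, and sharpening the estimate would require the Bessel-type expansions of Section~\ref{Bessel} in place of this compact-interval induction.

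The main obstacle is the uniformity of the implied constant in the ${\cal O}(\kappa^{-4})$ error of the propagation step. Lemma~\ref{previo2} is stated for one fixed $m$, but iterating it ${\cal O}(\kappa)$ times and summing the step errors is only legitimate if a single constant bounds all of them simultaneously; this is exactly what the compact-interval validity of~(\ref{x}) and~(\ref{w}) guarantees, and is the reason for confining the whole chain to $[-b',b']$. Once this uniform bound is recorded, what remains is the routine geometric bookkeeping already carried out for Corollary~\ref{coro}.
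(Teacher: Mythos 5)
Your proposal is correct and follows exactly the route the paper takes: the lemma for the smallest positive intermediate node as base case, Lemma~\ref{previo2} as the propagation step, and the accumulation of $n={\cal O}(\kappa)$ errors of size ${\cal O}(\kappa^{-4})$ to obtain the stated ${\cal O}(\kappa^{-3})$ term, just as in the remark following Corollary~\ref{coro}. Your explicit attention to the uniformity of the implied constants across the chain of inductive steps is a point the paper leaves implicit, but it is the same argument.
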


And the same comments following Corollary \ref{coro} apply here.

Using the Bessel-type expansions we can confirm, similarly as in the previous section,
that the relation between primary and secondary intermediate nodes holds uniformly in
all the interval $(-1,1)$, and not only in fixed compact subsets.

Considering the second equality in (\ref{depr}) together with the expansions (\ref{dos}) and (\ref{dosdos}) we have
$$
\Frac{\bar{x}_{n-i}}{\bar{z}_{n-i}}-1=\Frac{1}{\kappa^2}E_i+{\cal O}(\kappa^{-4}), i=1,2,...
$$
with
\begin{equation}
E_i=2\displaystyle\sum_{k=1}^{i}J_1 (j_{k})^{-2}-\Frac{j_{i}^2}{4}-\Frac{j_{i+1}^2}{4}.
\end{equation}

For the largest intermediate node ($i=1$) we have $D_1=-1.6428507\ldots$,
which is quite close to the error constant for the smallest nodes,
which is $E_i=-\pi^2/12=-1.6449340\ldots$. In fact, the following conjecture appears to be true, as numerical experiments
show.
\begin{conjecture}
The sequence $\{E_i\}_{i=1}^{+\infty}$ is negative, decreasing and with limit $\displaystyle\lim_{i\rightarrow +\infty}
E_i=-\Frac{\pi^2}{6}$.
\end{conjecture}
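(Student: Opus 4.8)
The plan is to split the statement into its three assertions---the value of the limit, the monotone decrease, and the negativity---and to reduce the last two to the first by two elementary manipulations. First I would record the telescoped difference
$$
E_i-E_{i-1}=2J_1(j_i)^{-2}-\Frac{j_{i+1}^2-j_{i-1}^2}{4},\quad i\geq 2,
$$
so that the sequence is decreasing precisely when $8J_1(j_i)^{-2}<j_{i+1}^2-j_{i-1}^2$ for every $i\geq 2$. Secondly, comparing the definition of $E_i$ with that of $C_i$ in Conjecture \ref{conjc} gives the exact identity
$$
E_i=C_i+\Frac{j_{i+1}^2-j_i^2}{4}-J_1(j_{i+1})^{-2},
$$
which ties the present sequence to the one already analysed in Appendix \ref{apc}.

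For the limit I would insert the McMahon expansion $j_k=a_k+\Frac{1}{8a_k}+{\cal O}(a_k^{-3})$, with $a_k=\pi(k-1/4)$, together with the expansion $J_1(j_k)^2=\Frac{2}{\pi j_k}\left(1+\Frac{1}{8j_k^2}+{\cal O}(j_k^{-4})\right)$ from \cite{Bogaert:2014:IFC}, so that $2J_1(j_k)^{-2}=\pi j_k-\Frac{\pi}{8j_k}+{\cal O}(j_k^{-3})$. Summing this and combining with $\Frac{j_i^2+j_{i+1}^2}{4}$, the ${\cal O}(i^2)$ contributions cancel identically, and---crucially---the two logarithmically divergent pieces $\Frac{1}{8}\displaystyle\sum_k\Frac{1}{k-1/4}$ (from $\pi\sum j_k$) and $-\Frac{1}{8}\displaystyle\sum_k\Frac{1}{k-1/4}$ (from $-\Frac{\pi}{8}\sum j_k^{-1}$) cancel as well, leaving a convergent constant. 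I would then evaluate that constant by Euler--Maclaurin, exactly as in Appendix \ref{apc}. Alternatively, the identity of the previous paragraph gives $\displaystyle\lim_i E_i=C_{+\infty}-\Frac{\pi^2}{4}$, because $\Frac{j_{i+1}^2-j_i^2}{4}-J_1(j_{i+1})^{-2}\to-\Frac{\pi^2}{4}$, so that the value $C_{+\infty}=\pi^2/12$ yields $-\pi^2/6$. Either route establishes finiteness unconditionally (the second one relying on the finiteness proved in Appendix \ref{apc}) and pins the limit to $-\pi^2/6$.

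Negativity then follows for free: once the monotone decrease is known, every term is bounded above by $E_1=2J_1(j_1)^{-2}-\Frac{j_1^2+j_2^2}{4}=-1.6428\ldots<0$, using the tabulated values of $j_1$, $j_2$ and $J_1(j_1)$. The remaining and genuinely hard point is therefore the monotonicity inequality $8J_1(j_i)^{-2}<j_{i+1}^2-j_{i-1}^2$. The difficulty is that both sides are asymptotically equal to $4\pi j_i$---indeed $8J_1(j_i)^{-2}\sim 4\pi j_i$ and $j_{i+1}^2-j_{i-1}^2=(j_{i+1}-j_{i-1})(j_{i+1}+j_{i-1})\sim 2\pi\cdot 2j_i$---so the crude bound $2J_1(j_i)^{-2}<\pi j_i$, which comes from the positivity of the $a_i$ established in Appendix \ref{Sonin}, is not sharp enough, and one needs a genuine second-order comparison between the spacing of consecutive zeros of $J_0$ and the value of $J_1$ at those zeros. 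This is precisely the obstacle already met for the monotonicity of $\{k_i\}$ and $\{C_i\}$, and it is the reason the statement is phrased as a conjecture: the limit and the sign are within reach, but the sharp Bessel inequality governing the decrease is not.
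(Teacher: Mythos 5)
The statement you are addressing is left as a conjecture in the paper: the only point the authors claim is provable is that $\lim_{i}E_i$ is finite (by the argument of Appendix \ref{apc}), and even that proof is omitted ``for brevity''. Your analysis therefore goes beyond anything in the paper, and most of it checks out. The difference formula $E_i-E_{i-1}=2J_1(j_i)^{-2}-(j_{i+1}^2-j_{i-1}^2)/4$ is correct; the exact identity $E_i=C_i+(j_{i+1}^2-j_i^2)/4-J_1(j_{i+1})^{-2}$ is correct; the limit $(j_{i+1}^2-j_i^2)/4-J_1(j_{i+1})^{-2}\rightarrow -\pi^2/4$ is confirmed by McMahon's expansion together with $2J_1(j_k)^{-2}=\pi j_k(1+{\cal O}(k^{-2}))$; the value $E_1=-1.6428\ldots<0$ matches the paper; and you correctly isolate the inequality $8J_1(j_i)^{-2}<j_{i+1}^2-j_{i-1}^2$ as the genuinely open obstruction, for exactly the reason you give (the two sides agree through the leading order $4\pi j_i$, so the Sonin-type bound of Appendix \ref{Sonin} is too crude).

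The one overstatement is that either route ``pins the limit to $-\pi^2/6$''. The second route gives $\lim_i E_i=C_{+\infty}-\pi^2/4$, but $C_{+\infty}=\pi^2/12$ is itself only Conjecture \ref{conjc}; Appendix \ref{apc} proves finiteness of $C_{+\infty}$, not its value. The first route likewise establishes only finiteness: after the ${\cal O}(i^2)$, ${\cal O}(i)$ and logarithmic pieces cancel, what remains is a convergent sum of Bessel-zero corrections plus explicit constants, and ``Euler--Maclaurin as in Appendix \ref{apc}'' does not evaluate it --- that appendix only shows the increments are ${\cal O}(k^{-2})$, which is why the value of $C_{+\infty}$ is itself conjectural. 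So your identity usefully shows that the two conjectured constants $\pi^2/12$ and $-\pi^2/6$ are not independent (proving one limit proves the other), but it does not establish either unconditionally. With that caveat, your proposal is consistent with, and sharper than, the paper's treatment, which offers no proof at all.
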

That the limit is finite can be proved similarly as it is proved in
appendix \ref{apc} that the limit $C_{+\infty}$ is finite. We omit the proof for brevity.

The previous result leads to the following conjecture.
\begin{conjecture}
\label{mainc1new}
The primary and secondary intermediate
nodes of Gauss-Legendre quadrature of degree $n$ are such that
$$
0<1-\Frac{\bar{x}_i}{\bar{z}_i}<\Frac{\pi^2}{6\kappa^2},\,i=1,\ldots n-1,
$$
with $\pi^2/6$ the best possible asymptotic constant.
\end{conjecture}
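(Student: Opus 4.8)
The plan is to prove Conjecture \ref{mainc1new} along exactly the same lines that connect Conjecture \ref{conjc} to Conjecture \ref{mainc1}, transcribing the argument from the primary/secondary nodes to the primary/secondary intermediate nodes. First I would dispose of the nodes lying in any fixed compact subinterval $[a,b]\subset(-1,1)$: for these, Corollary \ref{coro2} already gives
$$
1-\Frac{\bar{x}_k}{\bar{z}_k}=\Frac{\pi^2}{6\kappa^2}+{\cal O}(\kappa^{-3}),
$$
so both inequalities hold for $\kappa$ large enough, with leading constant exactly $\pi^2/6$. The compact-interval case is thus essentially complete, being built from the base lemma for the smallest positive intermediate node and the inductive step of Lemma \ref{previo2}. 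The remaining work concentrates on the extreme intermediate nodes, where the elementary expansion is invalid and one must pass to the Bessel-type asymptotics of section \ref{Bessel}.

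For those extreme nodes I would use the expansion recorded just before the statement, namely
$$
1-\Frac{\bar{x}_{n-i}}{\bar{z}_{n-i}}=-\Frac{E_i}{\kappa^2}+{\cal O}(\kappa^{-4}),\qquad E_i=2\displaystyle\sum_{k=1}^{i}J_1(j_k)^{-2}-\Frac{j_i^2}{4}-\Frac{j_{i+1}^2}{4},
$$
so that the two bounds of the conjecture correspond precisely to the two qualitative features of $\{E_i\}$ asserted in the preceding conjecture on that sequence. The strict lower bound $0<1-\bar{x}_i/\bar{z}_i$ is the negativity $E_i<0$; the strict upper bound $1-\bar{x}_i/\bar{z}_i<\pi^2/(6\kappa^2)$ is the inequality $E_i>-\pi^2/6$, which is a consequence of the sequence decreasing monotonically towards its limit. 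Finally, that $\pi^2/6$ is the best possible asymptotic constant follows from $\lim_{i\to\infty}E_i=-\pi^2/6$ read together with the compact-interval leading term above: the attained constants are approached arbitrarily closely but never exceeded.

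The hard part will be the same obstacle that keeps Conjecture \ref{mainc1}, and the uniform circle theorem of Conjecture \ref{uct}, out of reach. I expect that the finiteness of $\lim_i E_i$ can be established by the method used for $C_{+\infty}$ in Appendix \ref{apc}, and that its value $-\pi^2/6$ is recoverable through the McMahon expansion for the zeros $j_k$ combined with the large-argument asymptotics for $J_1$; what I do not have is a proof of the monotonic decrease of $\{E_i\}$, only strong numerical evidence, and without it the uniform upper bound cannot be secured for every $i$ simultaneously. A second, more structural difficulty is that the whole argument splices two separate analyses — the inductive compact-interval result behind Corollary \ref{coro2} and the Bessel-based treatment of the extreme nodes — rather than resting on a single uniform expansion. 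A fully rigorous proof would require carrying the Bessel-type asymptotics through for all intermediate nodes at once, without the re-expansion used to obtain (\ref{dos}); this unified analysis, appealing but delicate, is what I would ultimately aim for.
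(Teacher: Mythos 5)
Your proposal mirrors the paper's own supporting argument essentially step for step: the compact-interval case via the base lemma, Lemma \ref{previo2} and Corollary \ref{coro2}, the extreme intermediate nodes via the Bessel-type expansions and the sequence $E_i$, the identification of the two bounds with the negativity and the monotone decrease of $\{E_i\}$ towards $-\pi^2/6$, and the honest acknowledgement that the monotonicity and a unified (non-spliced) asymptotic analysis remain unproven — which is exactly why the paper states this as a conjecture rather than a theorem. No substantive difference from the paper's route.
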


\subsection{Relation between the first order partial moments and the intermediate nodes}

Let us now define $\alpha_0=0$, $\alpha_{i}=\alpha_{i-1} -2x_i w_i$, that is,
$$
\alpha_{n-i}=2\displaystyle\sum_{k=0}^{i-1} x_{n-k} w_{n-k}=2\displaystyle\sum_{k=i +1}^n x_k w_k.
$$
Notice that the previous two sums are equal because $x_{n-k+1}=-x_k$ and $w_{n-k+1}=w_{k}$, $k=1,\ldots n$, which also 
implies that the first order moment is zero, that is $\displaystyle\sum_{k=1}^n x_k w_k=0$. For the same reason
$\alpha_i=\alpha_{n-i}$, $i=0,\ldots n$.

We propose the following conjecture relating the first order partial moments with the secondary intermediate nodes.

\begin{conjecture}
\label{lili4}
The following holds for all the secondary intermediate nodes and partial moments:
$$
0<\Frac{\alpha_i}{1-(\bar{z}_i)^2}-1<\Frac{\pi^2}{12 \kappa^2},
$$
where the constant $\Frac{\pi^2}{12}$ is the best possible.
\end{conjecture}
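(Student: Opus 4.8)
The plan is to reduce this statement entirely to Conjecture \ref{mainc1} (the relation between the primary and secondary nodes) by means of two elementary telescoping identities; no new asymptotic analysis should be needed. First I would record the first\=/order difference relations supplied by the defining recursions. From $\alpha_i=\alpha_{i-1}-2x_iw_i$ one has at once $\alpha_i-\alpha_{i-1}=-2x_iw_i$. For the other quantity, writing $\bar z_i=\bar z_{i-1}+w_i$ and using $2z_i=\bar z_{i-1}+\bar z_i$ (the definition of the secondary node), a one\=/line computation gives
$$(1-\bar z_i^2)-(1-\bar z_{i-1}^2)=-w_i(2\bar z_{i-1}+w_i)=-2z_iw_i.$$
Subtracting the two relations, the difference $G_i:=\alpha_i-(1-\bar z_i^2)$ satisfies $G_i-G_{i-1}=2w_i(z_i-x_i)$ with $G_0=0$, whence $G_i=2\sum_{k=1}^{i}w_k(z_k-x_k)$.

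Second, I would exploit symmetry to fix the sign structure. Since $w_{n-k+1}=w_k$ and $x_{n-k+1}=-x_k$, one checks that $\alpha_{n-i}=\alpha_i$ and $\bar z_{n-i}=-\bar z_i$, so the ratio $\alpha_i/(1-\bar z_i^2)$ is invariant under $i\mapsto n-i$; it therefore suffices to treat $i\le n/2$, for which $\bar z_i\le 0$ and hence, by monotonicity of the sequences $\{\bar z_k\}$ and $\{z_k\}$ together with $z_k\le\bar z_k$, one has $z_k\le 0$ for every $k\le i$. Writing $z_k-x_k=(-z_k)(x_k/z_k-1)$, the summand becomes $2w_k(-z_k)(x_k/z_k-1)$ with both $w_k>0$ and $-z_k\ge 0$. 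Finally, the same telescoping identity applied with $x_i$ replaced by $z_i$ yields the normalisation $\sum_{k=1}^{i}2w_k(-z_k)=-2\sum_{k=1}^{i}z_kw_k=1-\bar z_i^2$.

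Putting these together, the key observation is that
$$\Frac{\alpha_i}{1-\bar z_i^2}-1=\Frac{G_i}{1-\bar z_i^2}=\Frac{\sum_{k=1}^{i}2w_k(-z_k)(x_k/z_k-1)}{\sum_{k=1}^{i}2w_k(-z_k)}$$
is a convex combination (a weighted average with positive weights $2w_k(-z_k)$) of the quantities $x_k/z_k-1$, $k=1,\dots,i$. Invoking the two\=/sided bound $0<x_k/z_k-1<\pi^2/(12\kappa^2)$ of Conjecture \ref{mainc1} term by term, the weighted average lies strictly between the same two bounds, which is precisely the assertion $0<\alpha_i/(1-\bar z_i^2)-1<\pi^2/(12\kappa^2)$. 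That $\pi^2/12$ is best possible follows because the upper bound in Conjecture \ref{mainc1} is attained asymptotically by the bulk (smallest) nodes, which dominate the average as $i\to n/2$.

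The main obstacle is not in this argument, which is purely algebraic and fully rigorous, but in its hypothesis: the whole statement is conditional on the (still unproven) uniform inequality of Conjecture \ref{mainc1}, itself resting on the conjectured monotonicity of $\{C_i\}$. It is worth stressing that the compact\=/interval estimate of Corollary \ref{coro} would not suffice here, since the sum defining $G_i$ ranges over all $k\le i$ and therefore always involves the extreme nodes near $-1$; genuinely uniform control of $x_k/z_k-1$ is thus indispensable. For the weaker asymptotic form $\alpha_i/(1-\bar z_i^2)=1+\pi^2/(12\kappa^2)+{\cal O}(\kappa^{-4})$ valid in a fixed compact subinterval, one could instead substitute the asymptotic relation of Corollary \ref{coro} directly into $G_i=2\sum_{k}w_k(z_k-x_k)$ and use $\sum_{k}w_kz_k=-\frac{1}{2}(1-\bar z_i^2)$ to recover the stated constant.
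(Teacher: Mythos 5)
Your argument is correct and takes a genuinely different---and in one respect stronger---route than the paper. The paper supports Conjecture \ref{lili4} in two ways: unconditionally but only asymptotically, by inserting the Bessel-type expansions (\ref{dos}) and (\ref{dosdos}) into $\alpha_{n-i}$ and $1-(\bar z_{n-i})^2$ and conjecturing monotonicity of the resulting constants $K_i$; and conditionally on Conjecture \ref{mainc1}, by viewing $\alpha_i$ as the output of a modified mid\-/point scheme for the IVP $y'=-2x$, $y(-1)=0$, on the mesh $\{\bar z_i\}$ and invoking second-order convergence (Observation \ref{goal}, Appendix \ref{ApE}). That route yields only $\max_i|D(\bar z_i)-\alpha_i|={\cal O}(n^{-2})$ with an unspecified constant, which does not by itself control the \emph{ratio} near the endpoints, where $1-\bar z_i^2$ is itself ${\cal O}(n^{-2})$. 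Your telescoping identities $\alpha_i-(1-\bar z_i^2)=2\sum_{k\le i}w_k(z_k-x_k)$ and $1-\bar z_i^2=2\sum_{k\le i}w_k(-z_k)$ exhibit $\alpha_i/(1-\bar z_i^2)-1$ exactly as a positive-weight average of the quantities $x_k/z_k-1$ (for $i\le n/2$, extended to all $i$ by the symmetry $\alpha_{n-i}=\alpha_i$, $\bar z_{n-i}=-\bar z_i$), so the two-sided bound of Conjecture \ref{mainc1} transfers verbatim, with the same sharp constant $\pi^2/12$ and with strict positivity---none of which the ODE argument delivers. Two small points to tidy: for odd $n$ the index $k=(n+1)/2$ with $x_k=z_k=0$ must be excluded, which is harmless since it never enters the sums for $i\le n/2$; and the sharpness claim is better supported by the paper's explicit computation of $\alpha_{n/2}=1+\pi^2/(12\kappa^2)+{\cal O}(\kappa^{-4})$ at $\bar z_{n/2}=0$ than by the remark that the bulk nodes ``dominate the average''. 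Like the paper's Observation \ref{goal}, your proof is conditional on the unproven Conjecture \ref{mainc1}, so the statement remains a conjecture; but your reduction is cleaner and sharper, while the paper's Bessel computation supplies independent unconditional asymptotic evidence that your route does not.
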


The asymptotic evidence supporting this conjecture is not as solid as for the case of conjecture \ref{mainc1}, as
it will be solely based on Bessel asymptotics for large nodes, and assuming another unproved
property for Bessel functions. However, later we discuss how this can be also seen as a consequence of
the asymptotic relation between primary and secondary nodes; for this purpose we solve an initial value
problem relating both results.

Now we see how the Bessel-type expansions suggest the validity of Conjecture \ref{lili4}.
Taking into account (\ref{dos}) and (\ref{dosdos}) we have
$$
x_{n-k}w_{n-k}=\Frac{2}{\kappa^2 J_1 (j_{k+1})^2}\left(1-\Frac{1}{12\kappa^2}-\Frac{2j_{k+1}^2}{3\kappa^2}\right)+{\cal O}(\kappa^{-5}),
$$
and therefore
$$
\alpha_{n-i}=\Frac{4G(\kappa)S_i^{(1)}}{\kappa^2}-\Frac{8S_i^{(2)}}{3\kappa^4}+{\cal O}(\kappa^{-5})
$$
where $G(\kappa)=1-\Frac{1}{12\kappa^2}$  and
$$
S^{(1)}_i=\displaystyle\sum_{k=0}^{i-1}\Frac{1}{J_1(j_{k+1})^2},\,S^{(2)}_i=\displaystyle\sum_{k=0}^{i-1}\Frac{j_{k+1}^2}{J_1(j_{k+1})^2}.
$$
On the other hand, considering again (\ref{dosdos})
$$
\bar{z}_{n-i}=1-\displaystyle\sum_{k=0}^{i-1}w_{n-k}=1-\left(\Frac{2G(\kappa)S_i^{(1)}}{\kappa^2}-
\Frac{S_i^{(2)}}{3\kappa^4}\right)+{\cal O}(\kappa^{-5}).
$$
With this two expansions we get
$$
\Frac{\alpha_{n-i}}{1-(\bar{z}_{n-i})^2}
=1+\left(S_i^{(1)}-\Frac{S_i^{(2)}}{2S_i^{(1)}}\right)\kappa^{-2}+{\cal O}(\kappa^{-4})
$$
which proves that
$\Frac{\alpha_{n-i}}{1-(\bar{z}_{n-i})^2}-1={\cal O}(\kappa^{-2})$, at least for the large zeros
(such that $j_i<<\kappa$ holds).

It turns out that the constants
\begin{equation}
K_i=S_i^{(1)}-\Frac{S_i^{(2)}}{2S_i^{(1)}}
\end{equation}
are bounded for any $i\in{\mathbb N}$, as numerical experiments show. We propose the following conjecture.
\begin{conjecture}
$\{K_i\}_{i=1}^{\infty}$ is a monotonically increasing positive sequence and
$$
\displaystyle\lim_{i\rightarrow +\infty}K_i=\Frac{\pi^2}{12}
$$
\end{conjecture}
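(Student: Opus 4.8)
The plan is to reduce all three assertions---positivity, monotonicity and the value of the limit---to a single increment analysis, and to obtain the limit by matching with the interior constant already isolated in Corollary \ref{coro}. Write $a_k=1/J_1(j_{k+1})^2$ and $p_k=j_{k+1}^2$, so that $S_i^{(1)}=\sum_{k=0}^{i-1}a_k$, $S_i^{(2)}=\sum_{k=0}^{i-1}p_k a_k$ and $K_i=S_i^{(1)}-S_i^{(2)}/(2S_i^{(1)})$. Since $S_{i+1}^{(1)}=S_i^{(1)}+a_i$ and $S_{i+1}^{(2)}=S_i^{(2)}+p_i a_i$, a direct computation gives
\[
K_{i+1}-K_i=a_i\left(1+\frac{S_i^{(2)}-p_i S_i^{(1)}}{2S_i^{(1)}(S_i^{(1)}+a_i)}\right),
\]
and because $a_i>0$ the sign of the increment is that of $\Delta_i:=2S_i^{(1)}(S_i^{(1)}+a_i)+S_i^{(2)}-p_iS_i^{(1)}$. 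Thus monotonicity is equivalent to $\Delta_i>0$ for every $i$, and, once monotonicity is granted, positivity of the whole sequence reduces to the single check $K_1=a_0-p_0/2=1/J_1(j_1)^2-j_1^2/2=0.8185\ldots>0$, after which $K_i\ge K_1>0$ for all $i$.

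For the limit I would argue by matched asymptotics. In the overlap region $1\ll i\ll n$ both the elementary expansions of Section \ref{prelim} and the Bessel-type expansions (\ref{dos})--(\ref{dosdos}) are valid: the interior derivation announced after Conjecture \ref{lili4} (the initial-value problem linking the moment relation to the primary/secondary node relation of Corollary \ref{coro}) yields the constant $\pi^2/12$ for $\alpha_i/(1-\bar z_i^2)-1$, whereas the Bessel side yields $K_i/\kappa^2$; consistency of the two representations in the overlap forces $\lim_{i\to\infty}K_i=\pi^2/12$. This can be cross-checked directly: inserting the McMahon expansion \cite[10.21.19]{NIST} for $j_{k+1}$ and the expansion \cite[Eq. (4.1)]{Bogaert:2014:IFC} for $J_1(j_{k+1})^2$ into $a_k$ and $p_k a_k$ and summing by Euler--Maclaurin, one finds that the $\mathcal{O}(i^2)$ and $\mathcal{O}(i)$ parts of $S_i^{(1)}$ and of $S_i^{(2)}/(2S_i^{(1)})$ coincide, so the two quadratic growths cancel in $K_i$ and only a constant survives; the Rayleigh sum $\sum_k j_k^{-2}=1/4$ and the companion sum rules for the $J_0$-zeros then fix that constant. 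Finiteness of such a limit is exactly what is established for $C_{+\infty}$ in Appendix \ref{apc}, and the same mechanism transfers here.

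The hard part is monotonicity, i.e. $\Delta_i>0$. Abel summation turns the troublesome difference into
\[
p_iS_i^{(1)}-S_i^{(2)}=\sum_{l=0}^{i-1}\left(j_{l+2}^2-j_{l+1}^2\right)S_{l+1}^{(1)},
\]
so that $\Delta_i=2S_i^{(1)}S_{i+1}^{(1)}-\sum_{l=0}^{i-1}(j_{l+2}^2-j_{l+1}^2)S_{l+1}^{(1)}$. The obstruction is that this is a near-exact cancellation: inserting the leading behaviour $a_k\sim\tfrac{\pi}{2}j_{k+1}$ and $j_{k+1}\sim\pi(k+\tfrac34)$ one checks that the coefficients of $i^4$, $i^3$ and $i^2$ in $\Delta_i$ all vanish, so the sign of $\Delta_i$ is decided only by its $\mathcal{O}(i)$ and $\mathcal{O}(1)$ part. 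In particular the crude bound $S_{l+1}^{(1)}\le S_i^{(1)}$ is far too lossy---it produces a negative lower bound---because the weights $j_{l+2}^2-j_{l+1}^2$ concentrate precisely where $S_{l+1}^{(1)}$ is largest. The route I would take is therefore (i) to carry the expansion of $\Delta_i$ to the order at which the first non-vanishing coefficient appears and show that coefficient is positive, giving $\Delta_i>0$ for all $i\ge i_0$, and (ii) to verify $\Delta_i>0$ for the finitely many $i<i_0$ by a rigorous interval-arithmetic evaluation of the $j_k$ and $J_1(j_k)$. The principal challenge lies in step (i): since three leading orders cancel, certifying the sign of the surviving coefficient requires the second-order McMahon and Bogaert corrections together with tight control of the Euler--Maclaurin remainder and of the non-asymptotic contribution of the first few terms. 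Making that control rigorous rather than merely numerically convincing is the crux, and it is of the same nature as the monotonicity statements left open in Conjectures \ref{conjc}, \ref{mainc1} and \ref{lili4}.
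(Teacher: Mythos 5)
First, be aware that the paper does not prove this statement: it is presented as a conjecture, supported only by numerical evidence (the value $K_1=C_0=0.8187877\ldots$ and the observation that $K_{100}$ matches $\pi^2/12$ to six digits), together with the finiteness argument of Appendix \ref{apc} for the analogous sequence $\{C_i\}$. So your proposal is not competing with an existing proof, and the reductions you set up are correct and genuinely useful: the increment identity $K_{i+1}-K_i=a_i\bigl(1+(S_i^{(2)}-p_iS_i^{(1)})/(2S_i^{(1)}S_{i+1}^{(1)})\bigr)$ checks out, the Abel-summation rewriting of $p_iS_i^{(1)}-S_i^{(2)}$ is correct, and deducing positivity of the whole sequence from monotonicity together with $K_1=1/J_1(j_1)^2-j_1^2/2>0$ is sound.

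As a proof, however, the proposal has two genuine gaps, both of which you partly acknowledge. (i) Monotonicity is reduced to $\Delta_i>0$, but the sign of $\Delta_i$ is never established: after observing that the $i^4$, $i^3$ and $i^2$ orders cancel, you propose to carry the expansion to the first non-vanishing coefficient and show it is positive, but that computation is not performed, and the rigorous control of the Euler--Maclaurin remainder and of the non-asymptotic contribution of the first few terms (which is exactly where a near-exact cancellation is decided) is the entire difficulty; nothing in the proposal closes it, so the central claim of the conjecture remains unproved. (ii) For the limit, the matched-asymptotics argument rests on the Bessel-type expansions (\ref{dos})--(\ref{dosdos}), whose ${\cal O}(\kappa^{-4})$ error terms are not uniform in $i$ (the paper only claims validity for $i\ll n$); letting $i\rightarrow+\infty$ inside the coefficient of $\kappa^{-2}$ and equating it with the interior constant of Corollary \ref{coro} therefore requires a uniformity statement in the overlap region that neither you nor the paper establishes. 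Likewise, the assertion that the Rayleigh sum $\sum_k j_k^{-2}=1/4$ and ``companion sum rules'' fix the surviving constant is stated, not derived: the constant term of $K_i$ involves the accumulated deviations of $1/J_1(j_{k+1})^2$ from its McMahon--Bogaert leading behaviour, and reducing those to closed-form sum rules is itself a nontrivial step. What can be extracted rigorously with the tools already in the paper is the finiteness of $\lim_{i\rightarrow+\infty}K_i$, by adapting Appendix \ref{apc}; the value $\pi^2/12$, the monotonicity, and hence positivity for all $i$ remain open, consistent with the statement being labelled a conjecture.
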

We observe that $K_1=C_0=0.8187877\ldots$, as could expected because
$$
\Frac{\alpha_{n-1}}{1-(\bar{z}_{n-1})^2}=\Frac{2x_n w_n}{1-(1-w_n)^2}=\Frac{x_n}{1-w_n/2}=\Frac{x_n}{z_n}.
$$
On the other hand, because $K_{+\infty}=\pi^2/12=0.8224\ldots$, we observe that the sequence
$\{K_i\}$, as happened with $\{C_i\}$, has very small variation. The convergence of the
$\{K_i\}$ sequence appears to be fast, and $K_{100}$ already approximates $\pi^2 /12$ with
 $6$ exact digits.

 A consequence of Conjecture \ref{lili4} is that, considering $n$ even, and because by symmetry
$\bar{z}_{n/2}=0$, we
 have that
 $$
 \alpha_{n/2}=2\displaystyle\sum_{x_i>0}x_i w_i =1+\Frac{\pi^2}{12\kappa^2}+{\cal O}(\kappa^{-4}).
 $$
 On other hand, for $n$ odd we have $0=x_k=z_{k}=\bar{z}^{(w)}_{k}-\frac12 w_{k}$ for
 $k=(n+1)/2$ and then
 $$
 \begin{array}{ll}
 \alpha_{k}&=2\displaystyle\sum_{x_i>0}x_i w_i=\left(1-\Frac{w_{k}^2}{4}\right)
 \left(1+\Frac{\pi^2}{12\kappa^2}+{\cal O}(\kappa^{-4})\right),\\
 &= 1-\Frac{\pi^2}{6\kappa^2}+{\cal O}(\kappa^{-4}).
 \end{array}
 $$

 These last results, as well as the rest of asymptotic relations between nodes and weights, can be
 numerically checked to high accuracy with an arbitrary precision implementation of the
 algorithm for Gauss-Gegenbauer quadrature of \cite{Gil:2021:FAR}.

\subsubsection{A complementary observation that supports Conjecture \ref{lili4}}
Notice that $D(x) = 1 - x^2$ is the solution of the following initial value problem (IVP):
\begin{equation}
\label{IVP}
\left\{
\begin{array}{l}
y' = -2x\ \mbox{in}\ (-1,1),\\
y(-1) = 0.
\end{array}
\right.
\end{equation}

\par Our goal is to prove the following observation, which demonstrates that proving that $\max_{0 \leq i \leq n} |D(\bar{z}_i) - \alpha_i| = {\cal O}(n^{-2})$ (cf. Conjecture \ref{lili4}) is as difficult as proving Conjecture \ref{mainc1}, but not more.
\begin{obs}\label{goal} Under Conjecture \ref{mainc1},
\beq
\max_{0 \leq i \leq n} |D(\bar{z}_i) - \alpha_i| = {\cal O}(n^{-2}).
\eeq
\end{obs}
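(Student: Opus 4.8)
The plan is to set up an exact first-order recurrence for the error $e_i := D(\bar{z}_i)-\alpha_i$ and to show that it telescopes into a sum whose every term is controlled by the node distance $|x_k-z_k|$, which Conjecture \ref{mainc1} bounds uniformly. First I would record that the initial data match: $e_0 = D(\bar{z}_0)-\alpha_0 = D(-1)-0 = 0$, mirroring the condition $y(-1)=0$ in the IVP (\ref{IVP}).

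The key step is to evaluate the increment of $D(\bar{z}_i)$ \emph{exactly}. Since $\bar{z}_i-\bar{z}_{i-1}=w_i$ and $\bar{z}_{i-1}+\bar{z}_i=2z_i$ by the definition of the secondary node $z_i$, and since $D(x)=1-x^2$ is quadratic,
\beq
D(\bar{z}_i)-D(\bar{z}_{i-1}) = \bar{z}_{i-1}^2-\bar{z}_i^2 = (\bar{z}_{i-1}-\bar{z}_i)(\bar{z}_{i-1}+\bar{z}_i) = -2z_i w_i .
\eeq
This is an exact identity, not an asymptotic one: it expresses that, for the quadratic $D$, the increment over $[\bar{z}_{i-1},\bar{z}_i]$ equals $D'$ evaluated at the midpoint $z_i$ times the step $w_i$ (equivalently, the midpoint rule is exact for the linear integrand $D'(x)=-2x$). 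Comparing with the defining recurrence $\alpha_i-\alpha_{i-1}=-2x_i w_i = D'(x_i)\,w_i$, I obtain $e_i-e_{i-1}=2w_i(x_i-z_i)$, and summing from $e_0=0$ gives $e_i = 2\sum_{k=1}^{i} w_k(x_k-z_k)$.

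From here the estimate is immediate. Invoking the bound (\ref{bodis}), a direct consequence of Conjecture \ref{mainc1}, namely $|x_k-z_k|<\pi^2/(12\kappa^2)$, together with the positivity of the weights and the normalization $\sum_{k=1}^{n}w_k=2$, I would conclude
\beq
|e_i| \le 2\sum_{k=1}^{i} w_k\,|x_k-z_k| < \Frac{\pi^2}{6\kappa^2}\sum_{k=1}^{n} w_k = \Frac{\pi^2}{3\kappa^2},
\eeq
uniformly in $i$. Since $\kappa=n+1/2$, this is ${\cal O}(n^{-2})$, which is the claim.

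I do not expect any genuine obstacle once Conjecture \ref{mainc1} is granted; the only point requiring care, and the conceptual heart of the observation, is recognizing that the midpoint identity makes the $D$-increment exact, so that no error is incurred from the ``quadrature'' of $D'$ itself and everything reduces to the uniform node comparison $|x_k-z_k|$. This is precisely why the observation shows that the moment estimate is no harder than Conjecture \ref{mainc1}: the uniform control of $|x_k-z_k|$ both drives the bound and, via the telescoping and the weight normalization, suffices to close it.
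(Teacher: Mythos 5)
Your proof is correct, and it takes a genuinely different and more elementary route than the paper's. The paper (Appendix~\ref{ApE}) recasts $\alpha_i$ as the output of a perturbed mid\-/point scheme for the IVP $y'=-2x$, $y(-1)=0$, and then invokes a general order\-/2 convergence theorem proved via consistency (Taylor expansions with third\-/derivative remainders) plus trivial stability; Conjecture~\ref{mainc1} enters only to guarantee that the step perturbations $d_i=h_i^*-h_i=2(x_{i+1}-z_{i+1})$ are ${\cal O}(n^{-2})$. You instead exploit the fact that $D$ is quadratic, so the identity $D(\bar{z}_i)-D(\bar{z}_{i-1})=-2z_iw_i$ is \emph{exact} (the mid\-/point rule has no error on a linear integrand), which collapses the whole consistency analysis: the error satisfies the exact telescoping relation $e_i=2\sum_{k=1}^{i}w_k(x_k-z_k)$, and the bound $|x_k-z_k|<\pi^2/(12\kappa^2)$ from (\ref{bodis}) together with $\sum_k w_k=2$ gives the explicit uniform bound $|e_i|<\pi^2/(3\kappa^2)$. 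What your approach buys is brevity, a fully explicit constant, and no smoothness bookkeeping; what the paper's approach buys is generality (Theorem~\ref{thm-convergence} applies to any $f\in{\rm C}^2$, not just $f(x)=-2x$) and a direct link to the finite\-/difference framework of section~\ref{nuclear} that motivated the observation. One cosmetic point: at the node where $z_j=0$ (odd $n$) the ratio in Conjecture~\ref{mainc1} is excluded by Remark~\ref{oddc}, but there $x_j-z_j=0$, so your bound holds trivially; it is worth a one\-/line mention.
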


\par To this aim, we are going to solve the IVP (\ref{IVP}) on the mesh of intermediate nodes $\{\bar{z}_i\}_{i = 0}^n$ by means of a convenient numerical method of order 2 which will compute the first order partial moment $\alpha_i$ as the approximation of $D(\bar{z}_i)$. Let us start trying the mid\=/point rule:
\begin{equation}
\label{mid-point-1}
\left\{
\begin{array}{l}
y_0 = 0,\\
y_{i+1} = y_i - 2 h_i (\bar{z}_i + h_i/2)\ \mbox{for}\ i = 0,\dots,n-1,
\end{array}
\right.
\end{equation}
where $h_i = w_{i+1}$ is the distance between $\bar{z}_i$ and $\bar{z}_{i+1}$.

\par Since $\bar{z}_i + h_i/2 = z_{i+1}$, the scheme (\ref{mid-point-1}) does not use the desired formula $y_{i+1} = y_i - 2 w_{i+1} x_{i+1}$, but rather $y_{i+1} = y_i - 2 w_{i+1} z_{i+1}$. This problem can be easily fixed by considering instead the following adjustment:
\begin{equation}
\label{mid-point-2}
\left\{
\begin{array}{l}
y_0 = 0,\\
y_{i+1} = y_i - 2 h_i (\bar{z}_i + h_i^*/2)\ \mbox{for}\ i = 0,\dots,n-1,
\end{array}
\right.
\end{equation}
where $h_i^* = 2 (x_{i+1} - \bar{z}_i)$. This scheme computes $y_i = \alpha_i$ for $i = 0,\dots,n$, whereupon the reasoning is complete and proves Observation \ref{goal}. The statement in Conjecture \ref{mainc1} is needed to guarantee that the step\=/sizes $h_i^*$ and $h_i$ are close enough so that the scheme (\ref{mid-point-2}) still has order 2. A detailed proof is given in the Appendix \ref{ApE}.

\section{Applications in finite difference schemes}

\label{nuclear}

The initial motivation for the study of these properties was the analysis of convergence of a number of finite differece schemes
used in the discretization of the angular Fokker\=/Planck diffusion operator, of particular relevance 
in the context of nuclear engineering. 
The 
properties discussed in this paper appear as necessary conditions for the convergence with order $2$ of some of these methods 
\cite{Lopez:2023:AOD}.
Although these properties are of intrinsic interest, next we briefly explain why they are important for the analysis of such finite difference schemes. These properties were previously either assumed correct or simply overlooked, and the present paper fills this gap by providing strong asymptotic evidence that they hold.

Let us define $D(x) = 1 - x^2$ and suppose that we want to discretize the operator
\begin{equation}
\label{FP-Lap}
\Delta_{\rm{FP}} f(x) = \left(D(x) f'(x)\right)',\quad \mbox{with}\ x \in [-1,1],
\end{equation}
by using, for each natural $n$, the mesh formed by the Gauss\=/Legendre nodes $x_i$. This is a natural choice which 
acquires special meaning if one must solve a PDE of which the above operator is a part and there are quantities of interest defined by means of integrals of functions involving the solution. To accomplish the discretization, Haldy and Ligou \cite{HA-LI} 
used as support the secondary intermediate nodes $\bar{z}_i$, $i = 0,\dots,n$, defined in (\ref{depr}), and, going through the intermediate step
\begin{equation}
\label{HL-previa}
\Delta_{\rm{FP}} f(x_i) \approx \frac{D(\bar{z}_i) f'(\bar{z}_i) - D(\bar{z}_{i-1}) f'(\bar{z}_{i-1})}{w_i},
\end{equation}
they finally employed
\begin{equation}
\label{HL}
\Delta_{\rm{FP}} f(x_i) \approx \frac{D(\bar{z}_i) \frac{f(x_{i+1}) - f(x_i)}{x_{i+1} - x_i} - D(\bar{z}_{i-1}) \frac{f(x_i) - f(x_{i-1})}{x_i - x_{i-1}}}{w_i}
\end{equation}
for $i = 1,\dots,n$, where terms containing the undefined nodes $x_0$ and $x_{n+1}$ are multiplied by zero and must be ignored.

\par Since $w_i = \bar{z}_i - \bar{z}_{i-1}$, the idea behind (\ref{HL-previa}) and (\ref{HL}) is clear as soon as one thinks of the centered formula of two points for the first derivative. It is then apparent that the formula (\ref{HL}) will perform better if
\begin{enumerate}
\item $x_i$ is close to $z_i$, the mid\=/point of $[\bar{z}_{i-1},\bar{z}_i]$ (see Conjecture \ref{mainc1}), and
\item $\bar{z}_i$ is close to $\bar{x}_i$, the mid\=/point of $[x_i,x_{i+1}]$ (see Conjecture \ref{mainc1new}).
\end{enumerate}

\par In particular, the interlacing property stated in Corollary \ref{interlace-result} is needed.

\par On the other hand, we have Morel's scheme \cite{MO}, which is obtained by substituting in (\ref{HL}) $D(\bar{z}_{i-1})$ and $D(\bar{z}_i)$ by the first order partial moments $\alpha_{i-1}$ and $\alpha_i$, respectively. It is natural to think that $\alpha_i$ must be close to $D(\bar{z}_i)$ (see Conjecture \ref{lili4}) so as not to lose accuracy.

\par Experimentally, both schemes converge with order 2. The proof of this fact, which uses the properties just mentioned, can be found in \cite{Lopez:2023:AOD}.

In these methods, a motivation for using nodes defined as sum of weights (what we call secondary nodes and secondary intermediate nodes) was to preserve the zeroth and first moment properties, that is,
\begin{eqnarray}
\label{0thMP} \int_{-1}^1 \Delta_{\rm{FP}} f(x)\ dx = 0,\\
\label{1stMP} \int_{-1}^1 x \Delta_{\rm{FP}} f(x)\ dx = -2 \int_{-1}^1 x f(x)\ dx,
\end{eqnarray}
in the discrete setting. Haldy-Ligou's scheme preserves the zeroth moment, while Morel's scheme preserves both.

We refer to \cite{Lopez:2023:AOD} for further details 
of the analysis of those finite differences schemes, for which the use of the uniform properties described in this paper is crucial. 
The reader might also like to consult references \cite{OL-FR}, \cite{PA-WA-PR}, and \cite{WA-PR} to see that Morel's 
scheme is still in use.

Although we are not aware of additional specific numerical methods, apart from \cite{HA-LI} and \cite{MO},
 where the properties discussed in this paper are used 
(probably because they were unknown), the secondary nodes can be useful in other numerical applications because they are 
closely related to 
Legendre nodes, as we have proved. We observe that in finite difference methods based on Legendre nodes, 
the differences between the nodes are prone to severe rounding errors,
 particularly significant at the extremes of $(-1,1)$, where the nodes tend to cluster. As pointed out by
 Laurie \cite[page 215]{Laurie:2001:COG}: ``For careful work, one should store as floating-point numbers not the nodes, 
 but the gaps between them".
Alternatively, the differences between secondary nodes are sums of positive weights, which can be accurately 
computed either by iterative methods \cite{Gil:2021:FAR} or by asymptotic methods \cite{Gil:2021:FAR} 
(even for a very large number of nodes); 
no cancellations occur in this case.

For a better understanding of the uses of the secondary nodes in approximation, 
it  will be interesting to analyze the Lebesgue constants corresponding to those nodes, both numerically and
 asymptotically. Some preliminary numerical
tests indicate that the Lebesgue functions are very similar for the primary and secondary nodes and that their 
asymptotic behavior is essentially the same. This will be the object of further study.

 \appendix

 \section{Appendix: Elementary expansion for the weights}

 \label{expanwe}

For obtaining the first term of the elementary asymptotic expasion for the weights, we start from Eqs. (4.3), (4.4), (4.5) and (2.20) of
\cite{Gil:2019:NIG} to write
\begin{equation}
\label{expanw}
w_i=\Frac{\pi}{\kappa}\sin\theta_i H_\kappa(\theta_i)^{-2},
\end{equation}
where
$$
H_\kappa(\theta)=\left(1+\Frac{m_2 (\theta)}{\kappa^2}+{\cal O}(\kappa^{-4})\right)\sin\chi+
\left(\Frac{n_1(\theta)}{\kappa}+{\cal O}(\kappa^{-3})\right)\cos\chi,
$$
with $\chi=\kappa\theta-\Frac{\pi}{4}$ and
$$
m_2(\theta)=\Frac{1}{384}\Frac{24-3\cos^2\theta}{\sin^2\theta},\,n_1(\theta)=-\Frac{1}{8}\cot\theta.
$$
Now, for obtaining an expansion for the weights we must substitute the expansion (\ref{expanx}) in
(\ref{expanw}) and re-expand in powers of $\kappa^{-2}$. For our purpose, it will be enough to use the
terms explicitly shown.

In the expression for $m_2$ and $n_1$ it will be enough to replace $\theta_i$ by $\alpha_i$. Now, we have
$$
\chi_i=\kappa\theta_i-\Frac{\pi}{4}=\left(n-i+\frac12\right)\pi+d_i,\,d_i=\Frac{\cot\alpha_i}{8\kappa},
$$
and with this
$$
\sin\chi_i=(-1)^{n-i}\left(1-\Frac{d_i^2}{2}+{\cal O}(\kappa^{-4})\right),\,
\cos\chi_i=-(-1)^{n-i}d_i+{\cal O}(\kappa^{-3})
$$
Substituting the expansions
$$
\begin{array}{ll}
(-1)^{n-i}H_\kappa (\theta_i)&=1+\Frac{m_2(\alpha_i)}{\kappa^2}
-\Frac{d_i^2}{2}
-\Frac{d_i n_1(\alpha_i)}{\kappa}+{\cal O}(\kappa^{-4})\\
&= 1+\Frac{1}{16\kappa^2\sin^2\alpha_i}+{\cal O}(\kappa^{-4})
\end{array}
$$
Therefore
\begin{equation}
\label{f1}
H_\kappa(\theta_i)^{-2}=1-\Frac{1}{8\kappa^2\sin^2\alpha_i}+{\cal O}(\kappa^{-4}).
\end{equation}
Finally, we have
\begin{equation}
\label{f2}
\begin{array}{ll}
\sin\theta_i&=\sin(\alpha_i+\delta_i)=\sin(\alpha_i)+\delta_i\cos(\alpha_i)+{\cal O}(\kappa^{-4})\\
&=\sin\alpha_i\left(1+\Frac{1}{8\kappa^2\sin^2\alpha_i}-\Frac{1}{8\kappa^2}\right)
\end{array}
\end{equation}
and substituting (\ref{f1}) and (\ref{f2}) in (\ref{expanw}) we get
\begin{equation}
w_i=\Frac{\pi}{\kappa}F(\kappa)\sin\alpha_i +{\cal O}(\kappa^{-5}),
\end{equation}
with $F(\kappa)$ defined in (\ref{fka}). This coincides with the expansion
\cite[A.8]{Opsomer:2022:AOA}, which is obtained by different means (using the Riemann-Hilbert approach).

\section{Appendix: Bessel-type expansion for the weights}

\label{BesselW}

For obtaining the Bessel-type expansions for the Gauss-Legendre weights we start by
 combining eqs. (4.2), (4.4), (4.6) and (3.10) of \cite{Gil:2019:NIG}, to write
 \begin{equation}
 \label{weBe}
 w_i=\Frac{2\sin\theta_i}{\kappa^2 \theta_i} H_{\kappa}(\theta_i)^{-2},
 \end{equation}
where
$$
H_{\kappa}(\theta)=J_1 (\kappa \theta)Y(\theta)-\Frac{1}{2\theta\kappa}
J_0 (\kappa\theta)Z(\theta)
$$
with $Y(\theta)$ and $Z(\theta)$ admitting asymptotic expansions in powers of $\kappa^{-2}$
$$
Y(\theta)\sim \displaystyle\sum_{m=0}^{+\infty}\Frac{Y_m (\theta)}{\kappa^{2m}},\,
Z(\theta)\sim \displaystyle\sum_{m=0}^{+\infty}\Frac{Z_m (\theta)}{\kappa^{2m}}.
$$
 Because $\theta_i=\beta_i+\delta_i$
with $\delta_i={\cal O}(\kappa^{-2})$, we have $Y(\theta_i)=1+Y_1(\beta_i)\kappa^{-2}+
{\cal O}(\kappa^{-4})$ and $Z(\theta_i)=1+{\cal O}(\kappa^{-2})$. For our purposes,
we will only need to known that $Y_0 (\theta)=Z_0 (\theta)=1$ and that
$Y_1(\beta_i)=\frac{1}{48}+{\cal O}(\beta_i^2)$ as $\beta_i\rightarrow 0$.

On the other hand $\kappa \theta_i=b_i+d_i$, with $b_i=j_{n-i+1}$ and then $J_0(b_i)=0$; in addition, using
the differentiation formulas \cite[10.6.2]{NIST} we have $J_1' (b_i)=-J_1(b_i)/b_i$,
$J_1''(b_i)=(2/b_i^{2}-1)J_1(b_i)$,  $J_0' (b_i)=-J_1(b_i)$ and then
$$
\begin{array}{l}
J_1(\kappa \theta_i)=J_1(b_i)\left(1-\Frac{d_i}{b_i}+\Frac{d_i^2}{b_i^2}-\frac12 d_i^2+{\cal O}(d_i^3)\right),\\
J_0(\kappa \theta_i)=J_1 (b_i)\left(-d_i+\Frac{d_i^2}{2b_i}+{\cal O}(d_i^3)\right).
\end{array}
$$

Putting this together and using that $Y_0(\theta)=Z_0 (\theta)=1$ gives
\begin{equation}
\label{HK}
H_\kappa(\theta_i)=J_1(b_i)\left(1-\Frac{1}{2}\Frac{d_i}{b_i}+\Frac{d_i^2}{4 b_i^2}-\Frac{1}{2}d_i^2
+\Frac{Y_1(\beta_i)}{\kappa^2}+{\cal O}(\kappa^{-4}).
+{\cal O}(d_i^3)\right)
\end{equation}

From this, we can proceed to compute the first terms of the asymptotic expansion as $k\rightarrow +\infty$,
without any restriction on the values of $\beta_i$. We are next simplifying the expansion
by further assuming that $\beta_i=j_{n-i+1}/\kappa <<1$, which holds for the largest zeros, but no so
for the zeros close to the origin. Under this approximation $\beta_i={\cal O}(\kappa^{-1})$,
$d_i={\cal O}(\kappa^{-2})$. Additionally, using Eq. (3.12) of  \cite{Gil:2019:NIG}, in this limit
we have $Y_1 (\beta_i)=\Frac{1}{48}+{\cal O}(\kappa^{-2})$. We neglect all the terms in (\ref{HK})
except the first two and the dominant contribution from $Y_1$, yielding
$$
H_\kappa(\theta_i)=J_1(b_i)\left(1-\Frac{1}{2}\Frac{d_i}{b_i}+\Frac{1}{48\kappa^2}+{\cal O}(\kappa^{-4})\right).
$$
Then
$$
\kappa\theta_i H_k(\theta)^2 =(b_i+d_i)F_k(\theta_i)^2=b_iJ_1 (b_i)^2
\left(1+\Frac{1}{24\kappa^2}+{\cal O}(\kappa^{-4})\right),
$$
which we insert in Eq. (\ref{weBe}) to get
$$
w_{i}=\Frac{2\sin\theta_{i}}{b_i\kappa J_1 (b_i)^2}\left(1-\Frac{1}{24\kappa^2}+{\cal O}(\kappa^{-4})\right).
$$
Now, because we are considering that $\beta_i={\cal O}(\kappa^{-1})$ we have
$$
\begin{array}{ll}
\sin\theta_i&=\sin(\beta_i+\delta_i)=\sin\beta_i+\delta_i\cos\beta_i+{\cal O}(\delta_i^2)\\
& = \beta_i(1-\frac16\beta_i^2 -\frac{1}{24\kappa^2}+{\cal O}(\kappa^{-4}))
\end{array}
$$
and finally we get
\begin{equation}
w_{n-i}=\Frac{2}{\kappa^2 J_1 (j_{i+1})^2}\left(1-\Frac{1}{12\kappa^2}-
\Frac{j_{i+1}^2}{6\kappa^2}+{\cal O}(\kappa^{-4})\right).
\end{equation}

This last approximation is accurate for $i<<n$, and coincides with the expansion \cite[A.6]{Opsomer:2022:AOA}.

\section{Appendix: proof of a property of Bessel functions}
\label{Sonin}

Previously, we used the property that 
$$
a_i=1-\Frac{2}{\pi j_{i+1}J_{1}(j_{i+1})^2},\, i=0,1,\ldots
$$
is a positive decreasing sequence. We prove
a more general result, using a variant of Sonin's theorem 
(see Lemma 3.3 in \cite{Killip:2019:SAT}).

\begin{theorem}
Let $w(x)$ be a solution of $w''(x)+A(x)w(x)=0$ in some interval where $A'(x)>0$ (respectively $A'(x)<0$), then the values of $w'(x)^2$ increase (respectively 
decrease) when they are evaluated at the successive zeros of $w(x)$ (in increasing order).
\end{theorem}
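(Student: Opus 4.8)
The plan is to use the standard Sonin energy function attached to the equation $w''+Aw=0$, exactly in the spirit of Lemma~3.3 in \cite{Killip:2019:SAT}. First I would introduce the quadratic invariant
$$
E(x)=w'(x)^2+A(x)\,w(x)^2,
$$
which is the natural ``energy'' for an oscillator with variable frequency $A$. Differentiating and using the differential equation to eliminate $w''$ gives
$$
E'(x)=2w'w''+A'w^2+2Aww'=2w'(w''+Aw)+A'(x)w(x)^2=A'(x)\,w(x)^2,
$$
so that the sign of $E'$ is governed entirely by the sign of $A'$. This single identity is the whole engine of the proof.

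Next I would evaluate $E$ at the zeros of $w$. If $x_k<x_{k+1}$ are two \emph{consecutive} zeros, then $w(x_k)=w(x_{k+1})=0$, whence $E(x_k)=w'(x_k)^2$ and $E(x_{k+1})=w'(x_{k+1})^2$. Integrating the identity for $E'$ over $[x_k,x_{k+1}]$ yields
$$
w'(x_{k+1})^2-w'(x_k)^2=\int_{x_k}^{x_{k+1}}A'(x)\,w(x)^2\,dx.
$$
Since $x_k$ and $x_{k+1}$ are consecutive zeros, $w$ does not vanish on the open interval $(x_k,x_{k+1})$, and hence $w(x)^2>0$ there. If $A'(x)>0$ throughout the interval the integral is strictly positive, giving $w'(x_{k+1})^2>w'(x_k)^2$; if $A'(x)<0$ it is strictly negative, giving the reverse inequality. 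Iterating this pairwise comparison over successive zeros in increasing order delivers exactly the claimed monotonicity of $w'(\cdot)^2$.

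Two minor points would need to be recorded to make the argument airtight, and I do not expect any genuine obstacle beyond them. First, at any zero $x_k$ one necessarily has $w'(x_k)\neq 0$: otherwise $w(x_k)=w'(x_k)=0$ would force $w\equiv 0$ by uniqueness for the linear second\=/order ODE, so the quantities $w'(x_k)^2$ are strictly positive and ``increase'' / ``decrease'' may be read in the strict sense. Second, the only step that requires a moment's care is the strict positivity (respectively negativity) of the integral, which follows at once from $w^2>0$ on the open interval between consecutive zeros together with the strict sign hypothesis on $A'$; with these two remarks in place the proof reduces to the one\=/line computation $E'=A'w^2$ and the telescoping over zeros.
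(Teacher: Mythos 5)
Your proposal is correct and follows essentially the same route as the paper: both use the Sonin energy function $E(x)=w'(x)^2+A(x)w(x)^2$, the identity $E'=A'w^2$, and evaluation at the zeros of $w$. Your added remarks on strictness ($w^2>0$ between consecutive zeros and $w'\neq 0$ at the zeros) are sound refinements of the same argument.
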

\begin{proof}
Let $f(x)=w'(x)^2+A(x)w(x)^2$, which has derivative $f'(x)=A'(x)w(x)^2$ (only zero at the
zeros of $w(x)$). Then $f(x)$ has the same monotonicity as $A(x)$ and because at the 
zeros of $w(x)$ we have $f(x)=w'(x)^2$ the result is proved.
\end{proof}

Next we apply the previous result to the Bessel differential equation.

\begin{corollary}
Let $v_\nu (x)=\displaystyle\sqrt{x}{y}^{\prime}_{\nu}(x)$, with 
${y}_{\nu}(x)$ any solution of the Bessel equation ($x^2 y_{\nu}''(x)+xy_{\nu}'(x)+(x^2-\nu^2)y_{\nu}(x)=0$) 
and let $c_i$, $i=1,2,\ldots$ be the 
positive zeros of ${y}_{\nu}(x)$ in increasing order. The sequence $v_\nu(c_i)^2$, is strictly decreasing
if $|\nu|<1/2$, strictly increasing if $|\nu|>1/2$ and constant if $|\nu|=1/2$.
\end{corollary}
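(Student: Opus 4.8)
The plan is to reduce the Bessel equation to the normal form $w''(x) + A(x) w(x) = 0$ demanded by the preceding theorem, by removing the first-order term via a Liouville transformation, and then to read off the sign of $A'(x)$ from the value of $\nu$. Concretely, I would set $w(x) = \sqrt{x}\, y_\nu(x)$. Dividing the Bessel equation by $x^2$ so that the coefficient of $y'$ becomes $1/x$, and substituting, one finds that the $w'$ term cancels and that what remains is
$$
w''(x) + \left(1 - \Frac{\nu^2 - 1/4}{x^2}\right) w(x) = 0,
$$
so that $A(x) = 1 - \Frac{\nu^2 - 1/4}{x^2}$.

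Next I would compute $A'(x) = \Frac{2(\nu^2 - 1/4)}{x^3}$ and observe that, for $x > 0$, its sign is exactly the sign of $\nu^2 - 1/4$. Thus $A'(x) < 0$ when $|\nu| < 1/2$, $A'(x) > 0$ when $|\nu| > 1/2$, and $A$ is identically $1$ (hence $A' \equiv 0$) when $|\nu| = 1/2$. The theorem then governs the monotonicity of $w'(x)^2$ evaluated at the successive zeros of $w$, and crucially its hypothesis holds on the entire half-line $x > 0$, so it applies simultaneously to every consecutive pair of zeros.

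The remaining step is to match the data of the corollary to this picture. Since $\sqrt{x} > 0$ for $x > 0$, the positive zeros of $w$ are precisely the positive zeros $c_i$ of $y_\nu$. Moreover $w'(x) = \Frac{1}{2\sqrt{x}} y_\nu(x) + \sqrt{x}\, y_\nu'(x)$, and at a zero $c_i$ the first summand vanishes, leaving $w'(c_i) = \sqrt{c_i}\, y_\nu'(c_i) = v_\nu(c_i)$. Hence $w'(c_i)^2 = v_\nu(c_i)^2$, and the three monotonicity regimes for $w'(c_i)^2$ translate verbatim into the claimed behaviour of $v_\nu(c_i)^2$: strictly decreasing for $|\nu| < 1/2$, strictly increasing for $|\nu| > 1/2$, and constant for $|\nu| = 1/2$. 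The borderline case $|\nu| = 1/2$ can also be confirmed directly, since there $y_\nu$ is an elementary combination of $\sin x$ and $\cos x$ scaled by $x^{-1/2}$, for which $v_\nu^2$ is manifestly constant along the zeros.

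There is essentially no serious obstacle in this argument: it is a clean application of the theorem, and the only point requiring genuine care is the bookkeeping in the Liouville transformation, namely verifying that the first-derivative term cancels and that the constant $1/4$ combines with $\nu^2$ to produce the centrifugal coefficient $\nu^2 - 1/4$. Everything else is bookkeeping already guaranteed by the positivity of $\sqrt{x}$ on the relevant interval.
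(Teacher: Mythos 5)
Your proposal is correct and follows exactly the paper's route: the Liouville substitution $w=\sqrt{x}\,y_\nu$ yields $A(x)=1-(\nu^2-1/4)/x^2$, the sign of $A'$ on $x>0$ is that of $\nu^2-1/4$, and $w'(c_i)=v_\nu(c_i)$ lets the preceding theorem transfer directly. Your write-up merely makes explicit the bookkeeping (the cancellation of the first-derivative term and the identification of $w'$ at the zeros) that the paper leaves implicit.
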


\begin{proof}
The function $w_{\nu}(x)=\displaystyle{\sqrt{x}}{y}_{\nu}(x)$ is solution of 
the differential equation $w''(x)+A(x)w(x)=0$ with 
$$
A(x)=1-\Frac{\nu^2-1/4}{x^2}.
$$

With this, and considering the previous theorem, we deduce that $w_{\nu}^{\prime}(c_i)^2$ increases with $i$ if
$|\nu|>1/2$ (because $A'(x)>0$), decreases if $|\nu|<1/2$ ($A'(x)>0$) and is constant if $|\nu|=1/2$ ($A'(x)=0$).
Now, because $w_{\nu}^{\prime}(c_i)=v_{\nu}(c_i)$, the monotonicity properties of the sequence  $v_\nu(c_i)^2$ are proved. 
\end{proof}

Any solution of the Bessel equations, up to a constant multiplicative factor, can be written as 
\begin{equation}
{\cal C}_{\nu}(x)=\cos\alpha J_{\nu}(x)
-\sin\alpha Y_{\nu}(x), 
\end{equation}
for some $\alpha\in [0,\pi)$,
where $J_{\nu}(x)$
and $Y_{\nu}(x)$ are the Bessel functions of the first and second kinds respectively \cite{NIST}. 
For any fixed value of $\alpha$, at the zeros $c_i$ of ${\cal C}_{\nu}(c_i)$ we have ${\cal C}'_{\nu}(c_i)=-{\cal C}_{\nu+1}(c_i)$ 
on account of \cite[10.6.2]{NIST}, and therefore
the previous Corollary implies
the next result.
\begin{corollary}
\label{coroc}
$c_i {\cal C}_{\nu+1}(c_i)^2$ is strictly increasing if $|\nu|>1/2$, strictly decreasing if $|\nu|<1/2$, and constant if $|\nu|=1/2$.
\end{corollary}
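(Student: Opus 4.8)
The plan is to recognize that Corollary \ref{coroc} is an immediate specialization of the preceding corollary rather than a result requiring fresh analysis. The key enabling observation is that, for any fixed $\alpha\in[0,\pi)$, the function ${\cal C}_\nu(x)=\cos\alpha\,J_\nu(x)-\sin\alpha\,Y_\nu(x)$ is itself a solution of the Bessel equation, being a linear combination of the two standard solutions $J_\nu$ and $Y_\nu$. Consequently I would apply the preceding corollary with the generic Bessel solution $y_\nu$ taken to be ${\cal C}_\nu$, and with the $c_i$ taken to be the positive zeros of ${\cal C}_\nu$ in increasing order, which is precisely the hypothesis of that corollary.

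With this identification, the quantity controlled by the preceding corollary is $v_\nu(c_i)^2=\left(\sqrt{c_i}\,{\cal C}'_\nu(c_i)\right)^2=c_i\,{\cal C}'_\nu(c_i)^2$, which is strictly decreasing for $|\nu|<1/2$, strictly increasing for $|\nu|>1/2$, and constant for $|\nu|=1/2$. To finish, I would rewrite ${\cal C}'_\nu(c_i)$ using the differentiation/recurrence formula \cite[10.6.2]{NIST}, which for a generic cylinder function reads ${\cal C}'_\nu(x)=\frac{\nu}{x}{\cal C}_\nu(x)-{\cal C}_{\nu+1}(x)$. Evaluating at a zero $c_i$ of ${\cal C}_\nu$ annihilates the first term on the right, leaving ${\cal C}'_\nu(c_i)=-{\cal C}_{\nu+1}(c_i)$, and hence $v_\nu(c_i)^2=c_i\,{\cal C}_{\nu+1}(c_i)^2$. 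The monotonicity of $v_\nu(c_i)^2$ therefore transfers verbatim to $c_i\,{\cal C}_{\nu+1}(c_i)^2$, yielding exactly the three claimed regimes.

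Since this is a direct transfer from the preceding corollary, there is no genuine analytical obstacle; the only points demanding care are bookkeeping ones. First, one must keep straight that the $c_i$ are the zeros of ${\cal C}_\nu$ and \emph{not} of ${\cal C}_{\nu+1}$, so that the vanishing of the $\frac{\nu}{x}{\cal C}_\nu$ term at $c_i$ is legitimate. Second, one should confirm the sign in the recurrence so that squaring removes it and the identity $v_\nu(c_i)^2=c_i\,{\cal C}_{\nu+1}(c_i)^2$ holds with no spurious factor. Both are routine once the correct form of \cite[10.6.2]{NIST} is invoked, and with them in place the statement follows with essentially no further computation.
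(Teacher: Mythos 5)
Your proposal is correct and follows essentially the same route as the paper: identify ${\cal C}_\nu$ as a generic Bessel solution, apply the preceding corollary to its zeros, and use the recurrence \cite[10.6.2]{NIST} to replace ${\cal C}'_\nu(c_i)$ by $-{\cal C}_{\nu+1}(c_i)$, so that $v_\nu(c_i)^2=c_i\,{\cal C}_{\nu+1}(c_i)^2$ inherits the stated monotonicity. The paper presents exactly this transfer (without displaying the intermediate computations), so there is nothing to add.
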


In the next discussion, we will need the asymptotic expansions for ${\cal C}_{\nu}(x)$ for large $x$ as well as the expansion for 
$c_i$ and large $i$. Combining the equations \cite[10.17,3-10.17.4]{NIST} we have
\begin{equation}
\label{AS}
{\cal C}_{\nu}(x)\sim\left(\Frac{2}{\pi x}\right)^{1/2}\left(\cos(\chi_{\nu}(x))P_{\nu}(x)-\sin(\chi_{\nu}(x))Q_{\nu}(x)\right),
\end{equation}
where
$$
\chi_{\nu}(x)=x+\alpha-\Frac{\nu \pi}{2}-\Frac{\pi}{4},
$$
$$
P_{\nu}(x)=\displaystyle\sum_{k=0}^{\infty}(-1)^k\Frac{a_{2k} (\nu)}{x^{2k}},\,
Q_{\nu}(x)=\displaystyle\sum_{k=0}^{\infty}(-1)^k\Frac{a_{2k+1} (\nu)}{x^{2k+1}},
$$
and $a_k(\nu)=(-2)^{-k}\left(\frac12-k\right)_k \left(\frac12+k\right)_k /k!$.

The dominant term in the expansion gives
$$
{\cal C}_{\nu}(x)\sim\left(\Frac{2}{\pi x}\right)^{1/2}\cos(\chi_{\nu}(x))\left(1+{\cal O}(x^{-1}))\right).
$$

McMahon asymptotic expansion for the zeros $c_i$ for large $i$ is obtained by inversion of the asymptotic series (\ref{AS}), as done 
in \cite[section 3.3.1]{Gil:2014:OTC}. The leading contribution is that which makes $\cos(\chi)=0$, that is 
$c_i+\alpha-\Frac{\nu \pi}{2}-\Frac{\pi}{4}\sim (2i-1)\Frac{\pi}{2}$, which gives $c_i\sim \left(i+\Frac{\nu}{2}-\frac14\right)-\alpha
\equiv \lambda_i$. With one additional term the expansion reads (see \cite{Gil:2014:OTC})
 \begin{equation}
 c_i=\lambda_i-\Frac{4\nu^2-1}{8\lambda_i}+{\cal O}(i^{-3}).
 \end{equation}

With the previous expansions, we can prove the following
\begin{lemma}
\label{limitc}
$$
\displaystyle\lim_{i\rightarrow \infty}\Frac{\pi c_i}{2}{\cal C}_{\nu+1}(c_i)^2=1.
$$
\end{lemma}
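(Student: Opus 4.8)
The plan is to evaluate the large\=/argument asymptotic expansion (\ref{AS}) at the zeros $c_i$ of ${\cal C}_\nu$. Since $c_i\to+\infty$ as $i\to\infty$, the expansion is legitimate in the limit, so the only quantities I must control are the phase $\chi_\nu(c_i)$ and the polynomial factors $P_\nu,Q_\nu$. The algebraic engine is the elementary phase\=/shift identity
\beq
\chi_{\nu+1}(x)=\chi_\nu(x)-\Frac{\pi}{2},
\eeq
which follows directly from the definition of $\chi_\nu$ and yields $\cos\chi_{\nu+1}(x)=\sin\chi_\nu(x)$ and $\sin\chi_{\nu+1}(x)=-\cos\chi_\nu(x)$. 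In words, raising the order by one interchanges the roles of the leading cosine and the subdominant sine in (\ref{AS}).

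First I would pin down the phase at a zero. Recalling from the definition of $a_k(\nu)$ that $P_\nu(x)=1+{\cal O}(x^{-2})$ and $Q_\nu(x)={\cal O}(x^{-1})$, the condition ${\cal C}_\nu(c_i)=0$ forces, via (\ref{AS}),
\beq
\cos\chi_\nu(c_i)\,\bigl(1+{\cal O}(c_i^{-2})\bigr)=\sin\chi_\nu(c_i)\,{\cal O}(c_i^{-1}),
\eeq
and since $\sin$ is bounded this gives $\cos\chi_\nu(c_i)={\cal O}(c_i^{-1})$. Consequently $\sin^2\chi_\nu(c_i)=1-\cos^2\chi_\nu(c_i)=1+{\cal O}(c_i^{-2})$; this is exactly the statement that the phase at $c_i$ sits, up to an ${\cal O}(c_i^{-1})$ correction, at an odd multiple of $\pi/2$, in agreement with the McMahon leading term $c_i\sim\lambda_i$ recalled before the lemma.

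Next I would substitute into (\ref{AS}) with $\nu$ replaced by $\nu+1$ and $x=c_i$. Using the phase\=/shift identity, $\cos\chi_{\nu+1}(c_i)=\sin\chi_\nu(c_i)=\pm1+{\cal O}(c_i^{-2})$ while $\sin\chi_{\nu+1}(c_i)=-\cos\chi_\nu(c_i)={\cal O}(c_i^{-1})$. Together with $P_{\nu+1}(c_i)=1+{\cal O}(c_i^{-2})$ and $Q_{\nu+1}(c_i)={\cal O}(c_i^{-1})$, the cross term $\sin\chi_{\nu+1}(c_i)\,Q_{\nu+1}(c_i)$ is ${\cal O}(c_i^{-2})$, so
\beq
{\cal C}_{\nu+1}(c_i)=\left(\Frac{2}{\pi c_i}\right)^{1/2}\bigl(\pm1+{\cal O}(c_i^{-2})\bigr),
\eeq
whence $\Frac{\pi c_i}{2}{\cal C}_{\nu+1}(c_i)^2=1+{\cal O}(c_i^{-2})$, and letting $i\to\infty$ completes the proof.

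The step I expect to require the most care is the second one: justifying rigorously that the vanishing of the asymptotic series at $c_i$ forces $\cos\chi_\nu(c_i)={\cal O}(c_i^{-1})$, i.e. that the genuine zeros track the zeros of the leading cosine with the claimed accuracy. This is precisely the content of the McMahon inversion of (\ref{AS}) recalled before the lemma, so I would invoke that expansion for $c_i$ rather than re\=/deriving it, and keep the truncation error below any fixed power of $c_i^{-1}$. I also note that Corollary \ref{coroc} already guarantees that $c_i{\cal C}_{\nu+1}(c_i)^2$ is monotone (for $|\nu|\neq1/2$) and hence convergent; the computation above serves only to identify the limiting value as $2/\pi$, equivalently $\Frac{\pi c_i}{2}{\cal C}_{\nu+1}(c_i)^2\to1$.
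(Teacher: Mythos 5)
Your proof is correct and follows essentially the same route as the paper: both evaluate the large-argument expansion (\ref{AS}) at $c_i$, use the phase shift between $\chi_\nu$ and $\chi_{\nu+1}$ together with the McMahon localization of $\chi_\nu(c_i)$ near odd multiples of $\pi/2$, and read off ${\cal C}_{\nu+1}(c_i)^2=\frac{2}{\pi c_i}\left(1+o(1)\right)$. The only cosmetic difference is that the paper substitutes the McMahon expansion directly to write $\chi_{\nu}(c_i)=(2i-1)\frac{\pi}{2}+{\cal O}(i^{-1})$, whereas you extract $\cos\chi_\nu(c_i)={\cal O}(c_i^{-1})$ from the zero condition itself (and your sign $\chi_{\nu+1}=\chi_\nu-\frac{\pi}{2}$ is the one consistent with the stated definition of $\chi_\nu$; the paper's proof writes $+\frac{\pi}{2}$, a typo that is immaterial after squaring).
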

\begin{proof}
Considering that $\chi_{\nu}(c_i)=(2i-1)\Frac{\pi}{2}+{\cal O}(i^{-1})$ and $\chi_{\nu+1}(x)=\chi_{\nu}(x)+\Frac{\pi}{2}$ we have 
$\cos\left(\chi_{\nu +1}(c_i)\right)=(-1)^{i+1}+{\cal O}(i^{-1})$ and therefore ${\cal C}_{\nu+1}=
(-1)^{i+1} \left(\Frac{2}{\pi c_i}\right)^{1/2}(1+{\cal O}(i^{-1}))$, which proves the result
\end{proof}

As a consequence of Corollary \ref{coroc} and Lemma \ref{limitc} we can state the following:

\begin{corollary}
The sequence $\{a_i\}$, $a_i=1-\Frac{2}{\pi c_i {\cal C}_{\nu+1}(c_i)^2}$, $0<c_1<c_2<\cdots$ being the positive zeros of ${\cal C}_{\nu}(x)$, is
strictly decreasing and positive if $|\nu|<1/2$, strictly increasing and negative if $|\nu|<1/2$ and
constant if $|\nu|=1/2$. Additionally $\displaystyle\lim_{i\rightarrow \infty}s_i=0$.
\end{corollary}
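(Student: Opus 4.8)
The plan is to reduce everything to the single auxiliary sequence
$$
b_i=\Frac{\pi c_i}{2}{\cal C}_{\nu+1}(c_i)^2,
$$
in terms of which $a_i=1-1/b_i$. First I would observe that $b_i>0$ for every $i$: since $c_i>0$ and the positive zeros of ${\cal C}_\nu$ are simple, the relation ${\cal C}'_\nu(c_i)=-{\cal C}_{\nu+1}(c_i)$ used above forces ${\cal C}_{\nu+1}(c_i)\neq 0$, so the square is strictly positive. This guarantees that $a_i$ is well defined and that the elementary map $t\mapsto 1-1/t$ may be applied to $b_i$ on the positive half-line.

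Next I would simply import the two results already in hand. Corollary \ref{coroc} states that $c_i{\cal C}_{\nu+1}(c_i)^2$ is strictly increasing for $|\nu|>1/2$, strictly decreasing for $|\nu|<1/2$, and constant for $|\nu|=1/2$; multiplying by the positive constant $\pi/2$ leaves this unchanged, so $b_i$ inherits exactly the same monotonicity. Lemma \ref{limitc} gives $\displaystyle\lim_{i\to\infty}b_i=1$.

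The corollary then follows by combining monotonicity with the limit. Since $t\mapsto 1-1/t$ has derivative $1/t^2>0$ on $(0,\infty)$, it is strictly increasing, so $a_i=1-1/b_i$ inherits the monotonicity of $b_i$ verbatim: decreasing when $|\nu|<1/2$, increasing when $|\nu|>1/2$, constant when $|\nu|=1/2$. For the sign I would use that a strictly monotone sequence converging to $1$ must lie strictly on one side of its limit: if $|\nu|<1/2$ then $b_i$ decreases to $1$, hence $b_i>1$ and $a_i>0$; if $|\nu|>1/2$ then $b_i$ increases to $1$, hence $0<b_i<1$ and $a_i<0$; if $|\nu|=1/2$ then $b_i\equiv 1$ and $a_i\equiv 0$. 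The limit assertion is immediate from continuity of $t\mapsto 1-1/t$ at $t=1$, giving $\displaystyle\lim_{i\to\infty}a_i=1-1=0$ in all three cases.

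There is essentially no hard step here: the result is a formal consequence of Corollary \ref{coroc} and Lemma \ref{limitc}. The only place requiring genuine care is the sign deduction, which is the single point where monotonicity and the limit must be used \emph{jointly} rather than separately — one must note that convergence to $1$ together with strict monotonicity pins $b_i$ strictly on one side of $1$, since strict monotonicity alone or the limit alone would not fix the sign of $a_i$. (I would also flag the apparent typos in the statement: the second branch should read $|\nu|>1/2$, and the symbol $s_i$ in the final limit should be $a_i$.)
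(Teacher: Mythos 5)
Your proposal is correct and follows exactly the route the paper intends: the paper gives no explicit proof, merely asserting the corollary ``as a consequence of Corollary~\ref{coroc} and Lemma~\ref{limitc}'', and your write-up supplies precisely that deduction (monotonicity from Corollary~\ref{coroc}, the limit $1$ from Lemma~\ref{limitc}, and the sign pinned down by strict monotonicity toward the limit). You also correctly identify the two typos in the statement: the second branch should read $|\nu|>1/2$, and the final limit should refer to $a_i$ rather than $s_i$.
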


The result that we have mentioned at the beginning of this Appendix is simply the case $\alpha=0$ of the previous corollary.

\section{Appendix: proof that the limit $C_{+\infty}$ is finite}

\label{apc}

We prove the convergence in a more general case. Let us define
$$
C_i=2\displaystyle\sum_{k=0}^{i-1}J_\nu^{\prime} (j_{k+1})^{-2}+J_\nu^{\prime} (j_{i+1})^{-2}-\Frac{j_{i+1}^2}{2}
$$
with $j_i$ the $i$-th positive zero of $J_{\nu}(x)$. For the case $\nu=0$ we obtain our previous definition.

 We write
 $$
 C_{i-1}=C_0+\displaystyle\sum_{k=1}^{i-1}s_k,\,s_k=C_{k}-C_{k-1}.
 $$
 We have
 $$
 s_k=J_{\nu}^{\prime}(j_{k+1})^{-2}+J_{\nu}^{\prime}(j_{k})^{-2}-\Frac{j_{k+1}^2-j_k^2}{2}.
 $$
 We are checking that as $k\rightarrow +\infty$, $s_k={\cal O}(k^{-2})$, which means that $\displaystyle\sum_{k=1}^{+\infty} s_k <+\infty$
 and completes the proof.

 For this purpose, using \cite[10.17.9]{NIST}, we have
 $$
 J_{\nu}^{\prime}(j_{k})^{-2}=\Frac{\pi}{2}j_{k}Q_{\nu}(j_{k})^{-2},
 $$
 where $Q_{\nu}(z)$ admits, as $z\rightarrow +\infty$ the expansion
 $$
 Q_{\nu}(z)\sim\sin w\displaystyle\sum_{i=0}^{+\infty}(-1)^i\Frac{b_{2i}}{z^{2i}}
 + \cos w\displaystyle\sum_{i=0}^{+\infty}(-1)^i\Frac{b_{2i+1}}{z^{2i+1}},
 $$
 where $w=z-\Frac{\nu\pi}{2}-\Frac{\pi}{4}$. Now, for large $k$ we have \cite[10.21.19]{NIST}
 \begin{equation}
 \label{zerola}
 j_{k}=a_k+\delta_k,\,\delta_k = \Frac{1-4\nu^2}{8a_k}+{\cal O}(a_k^{-3})
 ,\,a_k=\left( k +\Frac{\nu}{2}-\Frac{1}{4}\right)\pi.
 \end{equation}

 Now setting $z=j_k$, $w=\left(k-\frac12\right)\pi+\delta_k$ and then $\sin w=(-1)^{k+1}\cos\delta_k$,
  $\cos w=(-1)^k\sin\delta_k$. With this we get $Q_{\nu}(j_k)=1+{\cal O}(k^{-2})$ and
  $$
  s_k=\Frac{\pi}{2}\left(j_k + j_{k+1}\right)(1+{\cal O}(k^{-2}))
  -\Frac{1}{2}(j_{k+1}-j_k)(j_{k+1}+j_k)
  $$

  Now, because of (\ref{zerola})
  $$
  j_{k+1}-j_k\sim \pi +\Frac{(4\nu^2-1)\pi}{8 a_{k+1} a_{k} }=\pi+{\cal O}(k^{-2}).
  $$

  With this $s_k={\cal O}(k^{-2})$.

\section{Appendix: proof of Observation \ref{goal}}
\label{ApE}
In this appendix $x_i$ will be a generic point of the mesh, rather than the $i^{\rm th}$ Gaussian node. By using standard analysis of numerical schemes for initial value problems (IVPs), we will prove convergence of order 2 for a scheme which contains (\ref{mid-point-2}) as a particular case.

\par Consider, to be solved with some numerical scheme, the scalar IVP
\begin{equation}
\label{IVP-Appendix}
\left\{
\begin{array}{l}
y' = f(x)\ \mbox{in}\ (-1,1),\\
y(-1) = \eta \in \RE,
\end{array}
\right.
\end{equation}
where $f \in {\rm C}^2([-1,1])$. This problem has a unique solution $y \in {\rm C}^3([-1,1])$.

\par We are going to employ meshes
\begin{equation}
\label{mesh}
x_0 = -1 < x_1 < \dots < x_{n-1} < x_n = 1
\end{equation}
such that, if $h_i = x_{i+1} - x_i$ and $H_n = \max_{0\leq i \leq n-1} h_i$, one has
\begin{equation}
\label{hypothesis-size-1}
H_n = {\cal O}(n^{-1}).
\end{equation}

\par On any of these meshes, define the following scheme:
\begin{equation}
\label{scheme}
\left\{
\begin{array}{l}
y_0 = \eta,\\
y_{i+1} = y_i + h_i f(x_i + h_i^*/2)\ \mbox{for}\ i = 0,\dots,n-1,
\end{array}
\right.
\end{equation}
where $h_i^* = h_i + d_i$, with $D_n = \max_{0\leq i \leq n-1} |d_i|$ satisfying
\begin{equation}
\label{hypothesis-size-2}
D_n = {\cal O}(n^{-2}).
\end{equation}

\par When $h_i = h_i^* = h = 2/n$, the scheme (\ref{scheme}) is easily recognizable as more than one numerical method applied to problem (\ref{IVP-Appendix}). One is the mid\=/point rule adapted to have only one step (this can be done because $f$ does not depend on $y$); another one is the modified Euler scheme, which can be seen as a member of the Runge\=/Kutta family with the following Butcher tableau:
\begin{equation}
\begin{array}{c|cc}
 0 & 0 & 0 \\
 1/2 & 1/2 & 0 \\
\hline \\[-3mm]
  & 0 & 1.
\end{array}
\end{equation}

\par Both the mid\=/point rule and the modified Euler scheme are known to have order 2, so the following result is quite natural.

\begin{theorem}
\label{thm-convergence}
Under the hypotheses (\ref{hypothesis-size-1}) and (\ref{hypothesis-size-2}), the scheme (\ref{scheme}) is convergent of order 2, that is to say, $\max_{0 \leq i \leq n} |y(x_i) - y_i| = {\cal O}(n^{-2})$.
\end{theorem}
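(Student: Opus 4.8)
The plan is to exploit the decisive structural simplification that the right\=/hand side of the IVP (\ref{IVP-Appendix}) does not depend on $y$. This turns the scheme (\ref{scheme}) into a pure quadrature rule, so that \emph{no} stability or discrete Gronwall argument is required and the global error is nothing but the telescoping sum of the one\=/step errors, with no amplification. Concretely, I would write the exact increment as $y(x_{i+1})-y(x_i)=\int_{x_i}^{x_{i+1}}f(s)\,ds$, set $e_i=y(x_i)-y_i$, and subtract the update in (\ref{scheme}) to obtain $e_{i+1}=e_i+\tau_i$ with local error $\tau_i=\int_{x_i}^{x_{i+1}}f(s)\,ds-h_i f(x_i+h_i^*/2)$. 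Since $e_0=\eta-\eta=0$, this yields $\max_i|e_i|\le\sum_{i=0}^{n-1}|\tau_i|$, and the entire problem reduces to a \emph{uniform} bound on $|\tau_i|$.

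To bound $\tau_i$, I would insert the exact midpoint $m_i=x_i+h_i/2$ and split $\tau_i=\left[\int_{x_i}^{x_{i+1}}f(s)\,ds-h_i f(m_i)\right]+h_i\left[f(m_i)-f(x_i+h_i^*/2)\right]$. The first bracket is the error of the midpoint rule on a single subinterval; because $f\in{\rm C}^2([-1,1])$ this equals $\frac{h_i^3}{24}f''(\xi_i)$ for some $\xi_i$, hence is bounded by $\frac{1}{24}\|f''\|_\infty H_n^3$. For the second bracket, the evaluation point differs from $m_i$ only by $h_i^*/2-h_i/2=d_i/2$, so the mean value theorem gives $|f(m_i)-f(x_i+h_i^*/2)|\le\frac12\|f'\|_\infty|d_i|\le\frac12\|f'\|_\infty D_n$, and the prefactor $h_i\le H_n$ produces a contribution bounded by $\frac12\|f'\|_\infty H_n D_n$. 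Thus $|\tau_i|\le\frac{1}{24}\|f''\|_\infty H_n^3+\frac12\|f'\|_\infty H_n D_n$, uniformly in $i$.

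Finally I would sum the $n$ one\=/step errors and invoke (\ref{hypothesis-size-1}) and (\ref{hypothesis-size-2}): $\max_i|e_i|\le n\left(\frac{1}{24}\|f''\|_\infty H_n^3+\frac12\|f'\|_\infty H_n D_n\right)={\cal O}(nH_n^3)+{\cal O}(nH_nD_n)={\cal O}(n^{-2})$, where $nH_n^3={\cal O}(n\cdot n^{-3})={\cal O}(n^{-2})$ and $nH_nD_n={\cal O}(n\cdot n^{-1}\cdot n^{-2})={\cal O}(n^{-2})$. This is precisely the claimed order 2 convergence, and it specializes to the scheme (\ref{mid-point-2}) as required for Observation \ref{goal}.

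As for the main difficulty: there is essentially none in the error \emph{propagation}, and this is the whole point---the absence of $y$ on the right\=/hand side collapses the analysis to quadrature and removes any need for a Lipschitz/Gronwall estimate. The only genuinely quantitative step is the uniform control of the midpoint remainder (which is exactly where the hypothesis $f\in{\rm C}^2$ is consumed) together with the observation that perturbing the node by $d_i$ degrades the local error only from the expected ${\cal O}(H_n^3)$ to the still admissible ${\cal O}(H_nD_n)$. One minor technical caveat I would address is that the evaluation point $x_i+h_i^*/2$ must lie in $[-1,1]$ for $f$ to be defined; this holds for $n$ large since the shift $d_i/2$ is ${\cal O}(n^{-2})$, and if desired one simply extends $f$ to a ${\rm C}^2$ function on a slightly larger interval.
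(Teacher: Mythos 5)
Your proposal is correct and follows essentially the same route as the paper: trivial stability (pure telescoping of one-step errors, since $f$ does not depend on $y$) combined with a uniform ${\cal O}(H_n^3 + H_n D_n)$ bound on the local error, summed over $n$ steps. The only difference is organizational—you isolate the classical one-panel midpoint-rule remainder $\frac{h_i^3}{24}f''(\xi_i)$ and handle the node perturbation $d_i/2$ by the mean value theorem, whereas the paper reaches the same bound by Taylor-expanding $y(x_{i+1})$ and $y'(x_i+h_i^*/2)$ about $x_i$ and working with the $h_i$-normalized truncation error $\tau_{i+1}$.
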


\par The proof of Theorem \ref{thm-convergence} follows the standard approach, which is proving convergence by means of consistency and stability, but, since in this simple case function $f$ does not depend on $y$, stability is trivially satisfied.

\begin{lemma}\label{lemma-consistency}
Under the hypotheses (\ref{hypothesis-size-1}) and (\ref{hypothesis-size-2}), the scheme (\ref{scheme}) is consistent of order 2, that is to say, for any solution $y$ of $y' = f(x)$,
\begin{equation}
\max_{1 \leq i \leq n} |\tau_i| = {\cal O}(n^{-2}),
\end{equation}
where $\tau_{i+1} = (y(x_{i+1}) - y(x_i))/h_i - f(x_i + h_i^*/2)$ for $i = 0,...,n-1$.
\end{lemma}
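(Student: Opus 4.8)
The plan is to bound the local truncation error $\tau_{i+1}$ by splitting it into two contributions: the classical midpoint quadrature error, which I can control because $f\in {\rm C}^2([-1,1])$, and a perturbation arising from the fact that the scheme (\ref{scheme}) samples $f$ at the shifted point $x_i+h_i^*/2$ rather than at the genuine midpoint $x_i+h_i/2$. First I would use that $y'=f$, so the exact increment is $y(x_{i+1})-y(x_i)=\int_{x_i}^{x_{i+1}}f(s)\,ds$, giving
$$
\tau_{i+1}=\Frac{1}{h_i}\int_{x_i}^{x_{i+1}}f(s)\,ds - f\Bigl(x_i+\Frac{h_i^*}{2}\Bigr),\quad i=0,\ldots,n-1.
$$
Writing $m_i=x_i+h_i/2$ for the true midpoint, I would add and subtract $f(m_i)$ to obtain $\tau_{i+1}=R_i+P_i$, where $R_i=\Frac{1}{h_i}\int_{x_i}^{x_{i+1}}f(s)\,ds - f(m_i)$ is the midpoint-rule remainder and $P_i=f(m_i)-f(x_i+h_i^*/2)$ is the sampling perturbation.

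For the first piece I would invoke the standard midpoint error estimate: a second-order Taylor expansion of $f$ about $m_i$, integrated term by term, makes the linear term vanish by symmetry and yields $R_i=\Frac{h_i^2}{24}f''(\xi_i)$ for some $\xi_i\in(x_i,x_{i+1})$. Since $f''$ is bounded on $[-1,1]$ and $h_i\le H_n={\cal O}(n^{-1})$ by (\ref{hypothesis-size-1}), this gives $|R_i|={\cal O}(H_n^2)={\cal O}(n^{-2})$ uniformly in $i$. For the second piece I would apply the mean value theorem, $P_i=-\Frac{1}{2}f'(\zeta_i)(h_i^*-h_i)=-\Frac{1}{2}f'(\zeta_i)d_i$ for some $\zeta_i$ between $m_i$ and $x_i+h_i^*/2$; as $f'$ is bounded on $[-1,1]$ and $|d_i|\le D_n={\cal O}(n^{-2})$ by (\ref{hypothesis-size-2}), I get $|P_i|={\cal O}(D_n)={\cal O}(n^{-2})$ uniformly in $i$.

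Combining the two bounds and taking the maximum gives $\max_{1\le i\le n}|\tau_i|={\cal O}(n^{-2})$, which is the claim. The only ingredient beyond the textbook order-$2$ analysis of the midpoint rule is the control of $P_i$, and this is precisely where hypothesis (\ref{hypothesis-size-2}) enters: it is essential that the mismatch $d_i=h_i^*-h_i$ between the step used to advance and the step used to sample be one order smaller than the step itself, so that the perturbation does not degrade the consistency order. I therefore expect the perturbation estimate to be the main (though still routine) obstacle, everything else being the classical computation. It is worth noting that in the intended application the mismatch is $d_i=h_i^*-h_i=2(x_{i+1}-\bar z_i)-w_{i+1}=2x_{i+1}-\bar z_i-\bar z_{i+1}=2(x_{i+1}-z_{i+1})$, which is exactly twice the deviation controlled by Conjecture \ref{mainc1}; since $|z_{i+1}|\le 1$, that conjecture gives $|d_i|<\Frac{\pi^2}{6\kappa^2}={\cal O}(n^{-2})$, so hypothesis (\ref{hypothesis-size-2}) is met and Observation \ref{goal} follows.
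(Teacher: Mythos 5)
Your proof is correct, and it reaches the conclusion by a genuinely (if mildly) different route than the paper. The paper works directly with one\-/sided Taylor expansions about the left endpoint $x_i$: it expands $y(x_{i+1})$ to third order and $y'(x_i+h_i^*/2)$ to second order, obtaining
$$
\tau_{i+1}=\frac{h_i^2}{6}y'''(\xi_i)-\frac{d_i}{2}y''(x_i)-\frac{(h_i+d_i)^2}{8}y'''(\xi_i^*),
$$
and then bounds the three terms separately via $H_n^2$, $D_n$ and $(H_n+D_n)^2$ times the relevant sup\-/norms. You instead use the integral representation $y(x_{i+1})-y(x_i)=\int_{x_i}^{x_{i+1}}f(s)\,ds$ and split $\tau_{i+1}$ into the classical midpoint\-/rule remainder $R_i=\frac{h_i^2}{24}f''(\xi_i)$ plus the sampling perturbation $P_i=-\frac{d_i}{2}f'(\zeta_i)$. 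The two arguments are equivalent in substance --- both rest on exactly the two hypotheses $H_n={\cal O}(n^{-1})$ and $D_n={\cal O}(n^{-2})$ and both require $f\in{\rm C}^2$ --- but your decomposition is conceptually cleaner: it isolates the textbook second\-/order quadrature error from the effect of the shifted sample point, makes transparent that the perturbation costs only ${\cal O}(D_n)$ and needs only $f\in{\rm C}^1$, and yields a slightly sharper constant (the symmetric $h_i^2/24$ rather than the non\-/cancelling $h_i^2/6$ and $(h_i+d_i)^2/8$ terms of the one\-/sided expansion). Your closing identification $d_i=h_i^*-h_i=2(x_{i+1}-z_{i+1})$, bounded by Conjecture \ref{mainc1} so that hypothesis (\ref{hypothesis-size-2}) holds for the scheme (\ref{mid-point-2}), matches the paper's own justification at the end of the appendix; the only (shared, harmless) implicit assumption in both arguments is that the shifted sample points $x_i+h_i^*/2$ remain in $[-1,1]$, which is automatic in the intended application.
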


\begin{proof}
In this proof we will employ the notation $\|\varphi\|_\infty = \max_{x \in [-1,1]} |\varphi(x)|$ for $\varphi \in {\rm C}([-1,1])$.

\par Notice that $\tau_{i+1} = (y(x_{i+1}) - y(x_i))/h_i - y'(x_i + h_i^*/2)$ because $y$ is a solution of $y' = f(x)$, and recall that $y \in {\rm C}^3([-1,1])$. Now use the Taylor expansions
\begin{equation}
y(x_{i+1}) = y(x_i) + h_i y'(x_i) + \frac{h_i^2}{2} y''(x_i) + \frac{h_i^3}{6} y'''(\xi_i)
\end{equation}
and
\begin{equation}
y'(x_i + h_i^*/2) = y'(x_i) + \frac{h_i^*}{2} y''(x_i) + \frac{(h_i^*)^2}{8} y'''(\xi_i^*)
\end{equation}
together with $h_i^* = h_i + d_i$ to write
\begin{eqnarray}
\nonumber \tau_{i+1} = \{y'(x_i) + \frac{h_i}{2} y''(x_i) + \frac{h_i^2}{6} y'''(\xi_i)\} -\\
\nonumber \{y'(x_i) + \frac{(h_i + d_i)}{2} y''(x_i) + \frac{(h_i + d_i)^2}{8} y'''(\xi_i^*)\} = \\
\frac{h_i^2}{6} y'''(\xi_i) - \frac{d_i}{2} y''(x_i) - \frac{(h_i + d_i)^2}{8} y'''(\xi_i^*).
\end{eqnarray}

\par So,
\begin{equation}
|\tau_{i+1}| \leq \frac{H_n^2}{6} \|y'''\|_\infty + \frac{D_n}{2} \|y''\|_\infty + \frac{(H_n^2 + 2 H_n D_n + D_n^2)}{8} \|y'''\|_\infty
\end{equation}
for $i = 0,...,n-1$, and, as the bound on the right hand side does not depend on $i$, the result follows from the hypotheses (\ref{hypothesis-size-1}) and (\ref{hypothesis-size-2}).
\end{proof}

\par Now Theorem \ref{thm-convergence} can be proved as follows.

\begin{proof}[Proof of Theorem \ref{thm-convergence}]
The conclusion $\max_{0 \leq i \leq n} |y(x_i) - y_i| = {\cal O}(n^{-2})$ will be proved if $\max_{1 \leq i \leq n} |y(x_i) - y_i| = {\cal O}(n^{-2})$, as $y(x_0) = y_0 = \eta$.

\par Due to the definition of $\tau_{i+1}$ in Lemma \ref{lemma-consistency}, the values $y(x_i)$ of the exact solution satisfy
\begin{equation}
\left\{
\begin{array}{l}
y(x_0) = \eta,\\
y(x_{i+1}) = y(x_i) + h_i (f(x_i + h_i^*/2) + \tau_{i+1})\ \mbox{for}\ i = 0,\dots,n-1,
\end{array}
\right.
\end{equation}
while the approximate values $y_i$ satisfy the scheme (\ref{scheme}). Hence, we have that, for $i = 0,\dots,n-1$,
\begin{equation}
y(x_{i+1}) - y_{i+1} = y(x_i) - y_i + h_i \tau_{i+1},
\end{equation}
so
\begin{equation}
|y(x_{i+1}) - y_{i+1}| \leq |y(x_i) - y_i| + H_n |\tau_{i+1}|.
\end{equation}

\par By induction,
\begin{equation}
|y(x_i) - y_i| \leq H_n \sum_{j=1}^i |\tau_j|
\end{equation}
for $i = 1,...,n$, which implies
\begin{equation}
\label{eq-apoyo}
\max_{1 \leq i \leq n} |y(x_i) - y_i| \leq H_n \sum_{i=1}^n |\tau_i|.
\end{equation}

\par The proof ends by combining (\ref{eq-apoyo}) with the inequality
\begin{equation}
H_n \sum_{i=1}^n |\tau_i| \leq n H_n \max_{1 \leq i \leq n} |\tau_i|,
\end{equation}
the hypothesis (\ref{hypothesis-size-1}), and the consistency of order 2 stated in Lemma \ref{lemma-consistency}.
\end{proof}

Notice that, under Conjecture \ref{mainc1}, the scheme (\ref{mid-point-2}) converges with order 2 in virtue of Theorem \ref{thm-convergence}. Indeed, the collection $\{h_i\}_{i=0}^{n-1} = \{w_i\}_{i=1}^n$ is known to satisfy (\ref{hypothesis-size-1}), and Conjecture \ref{mainc1} implies that, for the $h_i^*$ in (\ref{mid-point-2}), hypothesis (\ref{hypothesis-size-2}) holds as well. So, we have in this way a detailed proof of Observation \ref{goal}.

 \subsection*{Acknowledgements}
 The authors thank the reviewers for many detailed and helpful comments. JS thanks A. Gil and N. M. Temme for useful comments.
 
 \subsection*{Author contributions}
 
 Both authors contributed equally to this work

 \subsection*{Funding}

LP acknowledges support from Ministerio de Ciencia e Innovaci\'on, project PID2021-122625OB-I00 with funds from MCIN/AEI/10.13039/501100011033/ FEDER, UE, and from the Xunta de Galicia (2021 GRC Gl-1563 - ED431C 2021/15). JS acknowledges support from Ministerio de Ciencia e Innovaci\'on, project
 PID2021-127252NB-I00 with funds from MCIN/AEI/10.13039/501100011033/ FEDER, UE.
 
 \subsection*{Availability of data and materials}
 
 Not applicable.
 
 \section*{Declarations}
 
 \subsection*{Competing interests}
 
 The authors declare no competing interests.


\begin{thebibliography}{10}

\bibitem{Bogaert:2014:IFC}
I.~Bogaert.
\newblock Iteration-free computation of {G}auss-{L}egendre quadrature nodes and
  weights.
\newblock {\em SIAM J. Sci. Comput.}, 36(3):A1008--A1026, 2014.

\bibitem{Davis:1961:SGT}
P.~J. Davis and P.~Rabinowitz.
\newblock Some geometrical theorems for abscissas and weights of {G}auss type.
\newblock {\em J. Math. Anal. Appl.}, 2:428--437, 1961.

\bibitem{Frenzen:1985:AUA}
C.~L. Frenzen and R.~Wong.
\newblock A uniform asymptotic expansion of the {J}acobi polynomials with error
  bounds.
\newblock {\em Canad. J. Math.}, 37(5):979--1007, 1985.

\bibitem{Gattes:1985:AAE}
Luigi Gatteschi and Giovanna Pittaluga.
\newblock An asymptotic expansion for the zeros of {J}acobi polynomials.
\newblock In {\em Mathematical analysis}, volume~79 of {\em Teubner-Texte
  Math.}, pages 70--86. Teubner, Leipzig, 1985.

\bibitem{Gautschi:2006:TCT}
Walter Gautschi.
\newblock The circle theorem and related theorems for {G}auss-type quadrature
  rules.
\newblock {\em Electron. Trans. Numer. Anal.}, 25:129--137, 2006.

\bibitem{Gil:2014:OTC}
Amparo Gil and Javier Segura.
\newblock On the complex zeros of {A}iry and {B}essel functions and those of
  their derivatives.
\newblock {\em Anal. Appl. (Singap.)}, 12(5):537--561, 2014.

\bibitem{Gil:2019:NIG}
Amparo Gil, Javier Segura, and Nico~M. Temme.
\newblock Noniterative computation of {G}auss-{J}acobi quadrature.
\newblock {\em SIAM J. Sci. Comput.}, 41(1):A668--A693, 2019.

\bibitem{Gil:2021:FAR}
Amparo Gil, Javier Segura, and Nico~M. Temme.
\newblock Fast and reliable high-accuracy computation of {G}auss-{J}acobi
  quadrature.
\newblock {\em Numer. Algorithms}, 87(4):1391--1419, 2021.

\bibitem{HA-LI}
Pierre\=/Andr\'e Haldy and Jacques Ligou.
\newblock A multigroup formalism to solve the {F}okker\=/{P}lanck equation
  characterizing charged particle transport.
\newblock {\em Nucl. Sci. Eng.}, 74(3):178--184, 1980.

\bibitem{Killip:2019:SAT}
Rowan Killip and Monica Visan.
\newblock Sonin's argument, the shape of solitons, and the most stably singular
  matrix.
\newblock In {\em Harmonic analysis and nonlinear partial differential
  equations}, RIMS K\^{o}ky\^{u}roku Bessatsu, B74, pages 23--32. Res. Inst.
  Math. Sci. (RIMS), Kyoto, 2019.

\bibitem{Laurie:2001:COG}
Dirk~P. Laurie.
\newblock Computation of {G}auss-type quadrature formulas.
\newblock volume 127, pages 201--217. 2001.
\newblock Numerical analysis 2000, Vol. V, Quadrature and orthogonal
  polynomials.

\bibitem{Lopez:2023:AOD}
Oscar L\'opez~Pouso and Javier Segura.
\newblock Analysis of difference schemes for the {F}okker-{P}lanck angular
  diffusion operator.
\newblock {\em Comput. Math. Appl.}. 181:84-110, 2025.

\bibitem{MO}
Jim~E. Morel.
\newblock An improved {F}okker\=/{P}lanck angular differencing scheme.
\newblock {\em Nucl. Sci. Eng.}, 89(2):131--136, 1985.

\bibitem{OL-FR}
Edgar Olbrant and Martin Frank.
\newblock Generalized {F}okker\=/{P}lanck theory for electron and photon
  transport in biological tissues: application to radiotherapy.
\newblock {\em Comput. Math. Methods Med.}, 11(4):313--339, 2010.

\bibitem{NIST}
Frank W.~J. Olver, Daniel~W. Lozier, Ronald~F. Boisvert, and Charles~W. Clark,
  editors.
\newblock {\em N{IST} handbook of mathematical functions}.
\newblock U.S. Department of Commerce, National Institute of Standards and
  Technology, Washington, DC; Cambridge University Press, Cambridge, 2010.
\newblock With 1 CD-ROM (Windows, Macintosh and UNIX).

\bibitem{Opsomer:2022:AOA}
Peter Opsomer and Daan Huybrechs.
\newblock High-order asymptotic expansions of gaussian quadrature rules with
  classical and generalized weight functions.
\newblock {\em J. Comput. Appl. Math.}, 434:115317, 2023.

\bibitem{PA-WA-PR}
Japan~K. Patel, James~S. Warsa, and Anil~Kant Prinja.
\newblock Accelerating the solution of the ${S}_{N}$ equations with highly
  anisotropic scattering using the {F}okker\=/{P}lanck approximation.
\newblock {\em Annals Nucl. Eng.}, 147:107665, 2020.

\bibitem{WA-PR}
James~S. Warsa and Anil~Kant Prinja.
\newblock A moment preserving ${S}_{N}$ discretization for one\=/dimensional
  {F}okker\=/{P}lanck equation.
\newblock {\em Trans. Am. Nucl. Soc.}, 106:362--365, 2012.

\end{thebibliography}
\end{document}